\newcommand*\pFq[6][8]{%
  \begingroup 
  \pFqmuskip=#1mu\relax
  \mathcode`=\string"8000
  \begingroup\lccode`\~=`\,
  \lowercase{\endgroup\let~}\pFqcomma
  F^{#2}_{#3}{\left(\genfrac..{0pt}{}{#4}{#5}\bigg|#6\right)}%
  \endgroup
}
\newcommand{\pFqcomma}{\mskip\pFqmuskip}
\newtheorem{theorem}{Theorem}[section]
\newtheorem{lemma}[theorem]{Lemma}
\newtheorem{corollary}[theorem]{Corollary}
\newtheorem{proposition}[theorem]{Proposition}
\begin{document}

\title[]{Probabilistic degenerate logarithm and heterogeneous Stirling numbers}

\author{Dae San  Kim}
\address{Department of Mathematics, Sogang University, Seoul 121-742, Republic of Korea}
\email{dskim@sogang.ac.kr}
\author{Taekyun  Kim}
\address{Department of Mathematics, Kwangwoon University, Seoul 139-701, Republic of Korea}
\email{tkkim@kw.ac.kr}

\subjclass[2010]{11B68; 11B73; 11B83; 60C05}
\keywords{probabilistic degenerate logarithm; probabilistic heterogeneous Stirling numbers; probabilistic degenerate Cauchy numbers; probabilistic degenerate Daehee numbers}

\begin{abstract} 
Let $Y$ be a random variable whose moment-generating function exists in some neighborhood of the origin. While probabilistic Stirling numbers of the first and second kind have been introduced, early definitions often failed to satisfy fundamental orthogonality and inverse relations or lacked consistency with classical forms in the case when $Y=1$. This paper addresses these limitations by utilizing redefined probabilistic Stirling numbers, $S_{1}^{Y}(n,k)$ and $S_{2}^{Y}(n,k)$, alongside their degenerate counterparts. Our primary objective is twofold: first, to introduce the probabilistic (degenerate) logarithm associated with $Y$, providing explicit expressions for various random variables and defining new probabilistic degenerate Daehee and Cauchy numbers; and second, to investigate probabilistic heterogeneous Stirling numbers and establish a probabilistic degenerate version of the Schl\"omilch formula, demonstrating that these new frameworks maintain the essential algebraic properties of their classical counterparts.
\end{abstract}

\maketitle

\markboth{\centerline{\scriptsize Probabilistic degenerate logarithm and heterogeneous Stirling numbers}}
{\centerline{\scriptsize Dae San Kim and Taekyun Kim}}

\section{Preliminaries} 
The falling and the rising factorial sequences are respectively given by
\begin{align*}
&(x)_{0}=1,\quad (x)_{n}=x(x-1)\cdots(x-n+1),\quad(n \ge 1), \\
&\langle x \rangle_{0}=1,\quad \langle x \rangle_{n}=x(x+1)\cdots(x+n-1),\quad(n \ge 1).
\end{align*}
Let $\lambda$ be any nonzero real number. Then
the degenerate falling and rising factorial sequences are defined by
\begin{align*}
&(x)_{0,\lambda}=1,\quad (x)_{n,\lambda}=x(x-\lambda)\cdots(x-(n-1)\lambda),\quad(n \ge 1), \\
&\langle x \rangle_{0,\lambda}=1,\quad \langle x \rangle_{n,\lambda}=x(x+\lambda)\cdots(x+(n-1)\lambda),\quad(n \ge 1).
\end{align*}
The degenerate exponentials are introduced as
\begin{align*}
&e_{\lambda}^{x}(t)=(1+\lambda t)^{\frac{x}{\lambda}}=\sum_{n=0}^{\infty}(x)_{n,\lambda}\frac{t^{n}}{n!}, \\
&e_{\lambda}(t)=e_{\lambda}^{1}(t)=(1+\lambda t)^{\frac{1}{\lambda}}=\sum_{n=0}^{\infty}(1)_{n,\lambda}\frac{t^{n}}{n!}.
\end{align*} \par
The degenerate Stirling numbers of the second kind are equivalently defined as (see [12])
\begin{equation} \label{1}
\begin{aligned}
&\frac{1}{k!}\big(e_{\lambda}(t)-1 \big)^{k}=\sum_{n=k}^{\infty}S_{2,\lambda}(n,k)\frac{t^{n}}{n!},\quad (k \ge 0), \\
&(x)_{n,\lambda}=\sum_{k=0}^{n}S_{2,\lambda}(n,k)(x)_{k},\quad(n \ge 0), \\
&S_{2,\lambda}(n,k)=\frac{1}{k!}\sum_{j=0}^{k}\binom{k}{j}(-1)^{k-j}(j)_{n,\lambda},\quad(n \ge k).
\end{aligned}
\end{equation}
The degenerate Stirling numbers of the first kind are variously specified as (see [11])
\begin{equation} \label{2}
\begin{aligned}
&\frac{1}{k!}\big(\log_{\lambda}(1+t)\big)^{k}=\sum_{n=k}^{\infty}S_{1,\lambda}(n,k)\frac{t^{n}}{n!}, \quad (k \ge 0),\\
&(x)_{n}=\sum_{k=0}^{n}S_{1,\lambda}(n,k)(x)_{k,\lambda},\quad (n \ge 0), 
\end{aligned}
\end{equation}
where $\log_{\lambda}t=\frac{1}{\lambda}(t^{\lambda}-1)$ is called the degenerate logarithm and $\log_{\lambda}(1+t)$ is the compositional inverse of $e_{\lambda}(t)-1$. \\
Taking the limits $\lambda \rightarrow 0$, we find that
\begin{align*}
&(x)_{n,\lambda} \rightarrow x^{n}, \quad \langle x \rangle_{n,\lambda} \rightarrow x^{n}, \quad e_{\lambda}^{x}(t) \rightarrow e^{xt}, \quad e_{\lambda}(t) \rightarrow e^{t},\\
&\log_{\lambda}t \rightarrow \log t, \quad S_{1,\lambda}(n,k) \rightarrow S_{1}(n,k), \quad 
S_{2,\lambda}(n,k) \rightarrow S_{2}(n,k),
\end{align*}
where $S_{1}(n,k)$ and $S_{2}(n,k)$ are respectively the Stirling numbers of the first kind and those of the second kind characterized by
\begin{equation} \label{3}
\frac{1}{k!}(e^{t}-1)^{k}=\sum_{n=k}^{\infty}S_{2}(n,k)\frac{t^{n}}{n!}, \quad
\frac{1}{k!}\big(\log(1+t)\big)^{k}=\sum_{n=k}^{\infty}S_{1}(n,k)\frac{t^{n}}{n!},\quad (k \ge 0).
\end{equation}
Furthermore, taking the limits $\lambda \rightarrow 1$ yields
\begin{equation*}
(x)_{n,\lambda} \rightarrow (x)_{n}, \quad \langle x \rangle_{n, \lambda} \rightarrow \langle x \rangle_{n}.
\end{equation*} \par
Recall that a power series $f(t)=\sum_{n=0}^{\infty}a_{n}\frac{t^{n}}{n!}$ is a delta series if $a_{0}=0$ and $a_{1} \ne 0$.
We recall three of the Lagrange inversion formula and its variants: for any formal power series $g(t)$ and any delta series $f(t)$, they are stated as (see [6,24]):
\begin{flalign*}
&(A)\,\,\,[t^{n}]\,g(\bar{f}(t))=\frac{1}{n}[t^{n-1}]\,g^{\prime}(t)\Big(\frac{t}{f(t)}\Big)^{n}, \\
&(B)\,\,\,[t^{n}]\,\big(\bar{f}(t)\big)^{k}=\frac{k}{n}[t^{n-k}]\, \Big(\frac{t}{f(t)}\Big)^{n}, \\
&(C)\,\,\,[t^{n}]\,\bar{f}(t)=\frac{1}{n}[t^{n-1}]\,\Big(\frac{t}{f(t)}\Big)^{n}, &&
\end{flalign*}
where $\bar{f}(t)$ is the compositional inverse of $f(t)$, $n$ is any nonnegative integer and $k$ is any positive integer. \\
Applying the formula (C) to $f(t)=e_{\lambda}(t)-1$, we arrive at
\begin{equation} \label{4}
[t^{n}]\, \log_{\lambda}(1+t)=\frac{1}{n}[t^{n-1}]\,\Big(\frac{t}{e_{\lambda}(t)-1} \Big)^{n}=\frac{1}{n!}\beta_{n-1,\lambda}^{(n)},
\end{equation}
where the degenerate Bernoulli polynomials and numbers of order $\gamma$ are respectively given by
\begin{equation} \label{5}
\Big(\frac{t}{e_{\lambda}(t)-1}\Big)^{\gamma}e_{\lambda}^{x}(t)=\sum_{n=0}^{\infty}\beta_{n,\lambda}^{(\gamma)}(x)\frac{t^{n}}{n!},\quad \beta_{n,\lambda}^{(\gamma)}=\beta_{n,\lambda}^{(\gamma)}(0),
\end{equation}
for any complex number $\gamma$.
We see from \eqref{4} that
\begin{equation} \label{6}
\log_{\lambda}(1+t)=\sum_{n=1}^{\infty}\beta_{n-1,\lambda}^{(n)}\frac{t^{n}}{n!}.
\end{equation}
In addition, applying the formula (B) to $f(t)=e_{\lambda}(t)-1$ gives
\begin{equation} \label{7}
[t^{n}]\,\big(\log_{\lambda}(1+t)\big)^{k}=\frac{k}{n}\beta_{n-k,\lambda}^{(n)}\frac{1}{(n-k)!}.
\end{equation}
Thus it follows from \eqref{7} that 
\begin{equation} \label{8}
\frac{1}{k!}\big(\log_{\lambda}(1+t)\big)^{k}=\sum_{n=k}^{\infty}\binom{n-1}{k-1}\beta_{n-k,\lambda}^{(n)}\frac{t^{n}}{n!},\quad (k \ge 0).
\end{equation}
Now, one observes from \eqref{2} and \eqref{8} that
\begin{equation} \label{9}
S_{1,\lambda}(n,k)=\binom{n-1}{k-1}\beta_{n-k,\lambda}^{(n)}, \quad (n \ge k).
\end{equation}
Taking the limits $\lambda \rightarrow 0$ in \eqref{6} and \eqref{9} yields
\begin{equation*}
B_{n-1}^{(n)}=(-1)^{n-1}(n-1)!,\quad S_{1}(n,k)=\binom{n-1}{k-1}B_{n-k}^{(n)},
\end{equation*}
where the Bernoulli polynomials and numbers of order $\gamma$ are respectively defined by
\begin{equation} \label{10}
\Big(\frac{t}{e^{t}-1} \Big)^{\gamma}e^{x t}=\sum_{n=0}^{\infty}B_{n}^{(\gamma)}(x)\frac{t^{n}}{n!},\quad B_{n}^{(\gamma)}=B_{n}^{(\gamma)}(0).
\end{equation} \par
The Lah numbers are equivalently specified as
\begin{equation} \label{11}
\begin{aligned}
&\frac{1}{k!}\Big(\frac{t}{1-t}\Big)^{k}=\sum_{n=k}^{\infty}L(n,k)\frac{t^{n}}{n!}, \quad (k \ge 0),\quad L(n,k)=\frac{n!}{k!}\binom{n-1}{k-1},\quad (n \ge k), \\
&\langle x \rangle_{n}=\sum_{k=0}^{n}L(n,k)(x)_{k}, \quad (n \ge 0).
\end{aligned}
\end{equation}
The Lah-Bell polynomials are variously characterized by
\begin{equation} \label{12}
\begin{aligned}
&LB_{n}(x)=\sum_{k=0}^{n}L(n,k)x^{k}, \quad
e^{x\big(\frac{1}{1-t}-1\big)}=e^{x\frac{t}{1-t}}=\sum_{n=0}^{\infty}LB_{n}(x)\frac{t^{n}}{n!}, \\
&LB_{n}(x)=e^{-x}\sum_{k=0}^{n}\frac{\langle k \rangle_{n}}{k!} x^{k}.
\end{aligned}
\end{equation} \par
The heterogeneous Stirling numbers of the second kind are equivalently introduced as  (see [17])
\begin{equation} \label{13}
\begin{aligned}
&\langle x \rangle_{n,\lambda}=\sum_{k=0}^{n}H_{\lambda}(n,k)(x)_{k},  \quad (n \ge 0),\\
&\frac{1}{k!}\big(e_{-\lambda}(t)-1\big)^{k}=\frac{1}{k!}\big(e_{\lambda}^{-1}(-t)-1\big)^{k}=\sum_{n=k}^{\infty}H_{\lambda}(n,k)\frac{t^{n}}{n!}, ,\quad (k \ge 0),\\
&H_{\lambda}(n,k)=\frac{1}{k!}\sum_{j=0}^{k}\binom{k}{j}(-1)^{k-j}\langle j \rangle_{n,\lambda}, \quad(n \ge k),
\end{aligned}
\end{equation}
which show 
\begin{equation*} 
H_{\lambda}(n,k)=S_{2,-\lambda}(n,k)=(-1)^{n}S_{2,\lambda}^{-1}(n,k),\quad (n \ge k),
\end{equation*}
with $\frac{1}{k!}\big(e_{\lambda}^{x}(t)-1\big)^{k}=\sum_{n=k}^{\infty}S_{2,\lambda}^{x}(n,k)\frac{t^{n}}{n!}$. \\
Here we see  from \eqref{3}, \eqref{11} and \eqref{13} that 
\begin{equation*}
\lim_{\lambda \rightarrow 0}H_{\lambda}(n,k)=S_{2}(n,k), \quad
\lim_{\lambda \rightarrow 1}H_{\lambda}(n,k)=L(n,k).
\end{equation*}
The heterogeneous Stirling numbers of the first kind are defined, by inverting the relations in \eqref{13}, as (see [17])
\begin{equation} \label{14}
\begin{aligned}
&(x)_{n}=\sum_{k=0}^{n}G_{\lambda}(n,k)\langle x \rangle_{k,\lambda}, \quad (n \ge 0),\\
&\frac{1}{k!}\Big(-\log_{\lambda}\Big(\frac{1}{1+t}\Big)\Big)^{k}=\frac{1}{k!}\Big(\log_{-\lambda}\big(1+t \big)\Big)^{k}=\sum_{n=k}^{\infty}G_{\lambda}(n,k)\frac{t^{n}}{n!},\quad (k \ge 0).
\end{aligned}
\end{equation}
From \eqref{14} and the formula (B), we find that
\begin{equation*}
G_{\lambda}(n,k)=\binom{n-1}{k-1}\beta_{n-k, -\lambda}^{(n)}, \quad (n \ge k).
\end{equation*}
Applying the formula (C) to $f(t)=\log_{-\lambda}(1+t)$ gives
\begin{equation*}
[t^{n}]\,\big(e_{-\lambda}(t)-1\big)^{k}=\frac{k}{n}[t^{n-k}]\,\Big(\frac{t}{\log_{-\lambda}(1+t)}\Big)^{n}=\frac{k}{n}C_{n-k,-\lambda}^{(n)} \frac{1}{(n-k)!},
\end{equation*}
which shows
\begin{equation} \label{15}
\frac{1}{k!}\big(e_{-\lambda}(t)-1\big)^{k}=\sum_{n=k}^{\infty}\binom{n-1}{k-1}C_{n-k, -\lambda}^{(n)}\frac{t^{n}}{n!},\quad (k \ge 0).
\end{equation}
Thus, from \eqref{1}, \eqref{2}, \eqref{13} and \eqref{15}, we arrive at
\begin{equation*}
H_{\lambda}(n,k)=\binom{n-1}{k-1}C_{n-k, -\lambda}^{(n)}, \quad S_{2,\lambda}(n,k)=\binom{n-1}{k-1}C_{n-k, \lambda}^{(n)}, \quad (n \ge k).
\end{equation*}
Here the degenerate Cauchy numbers of order $\gamma$ are given by
\begin{equation} \label{16}
\Big(\frac{t}{\log_{\lambda}(1+t)}\Big)^{\gamma}=\sum_{n=0}^{\infty}C_{n,\lambda}^{(\gamma)}\frac{t^{n}}{n!}.
\end{equation} \par
Heterogeneous Bell polynomials and numbers are respectively defined by
\begin{equation} \label{17}
H_{n,\lambda}(x)=\sum_{k=0}^{n}H_{\lambda}(n,k)x^{k}, \quad H_{n,\lambda}=H_{n,\lambda}(1).
\end{equation}
This entails that
\begin{equation} \label{18}
\begin{aligned}
&e^{x\big(e_{-\lambda}(t)-1\big)}=\sum_{n=0}^{\infty}H_{n,\lambda}(x)\frac{t^{n}}{n!}, \\
&H_{n,\lambda}(x)=e^{-x}\sum_{k=0}^{\infty}\frac{\langle k \rangle_{n,\lambda}}{k!}x^{k}.
\end{aligned}
\end{equation}
Moreover, one shows that
\begin{align*}
&H_{\lambda}(n,k)=\sum_{l=k}^{n}(-1)^{n-l}S_{2}(l,k)S_{1}(n,l)\lambda^{n-l}, \\
&L(n,k)=\sum_{l=k}^{n}(-1)^{n-l}S_{1,\lambda}(n,l)H_{\lambda}(l,k),\quad (n \ge k).
\end{align*} \par
The partial Bell polynomials are characterized by
\begin{equation*}
B_{n,k}(x_{1},x_{2},\dots,x_{n-k+1})=\sum_{\substack{j_{1}+j_{2}+\cdots=k \\ j_{1}+2j_{2}+\dots=n}}\frac{n!}{j_{1}!j_{2}!\cdots}\Big(\frac{x_{1}}{1!} \Big)^{j_{1}}\Big(\frac{x_{2}}{2!}\Big)^{j_{2}}\cdots,
\end{equation*}
where $j_{1}, j_{2}, \dots$ are nonnegative integers. The generating function of the partial Bell polynomials is specified as
\begin{equation} \label{19}
\frac{1}{k!}\bigg(\sum_{m=1}^{\infty}x_{m}\frac{t^{m}}{m!}\bigg)^{k}=\sum_{n=k}^{\infty}B_{n,k}(x_{1},x_{2},\dots,x_{n-k+1})\frac{t^{n}}{n!}, \quad (k \ge 0).
\end{equation}
General references for this paper include [6,23,24,25].

\section{Introduction}
Throughout this paper, we assume that $Y$ is a random variable such that the moment-generating function of $Y$
\begin{equation} \label{20}
E[e^{tY}]=\sum_{n=0}^{\infty}E[Y^{n}]\frac{t^{n}}{n!}, \quad (|t|<r)
\end{equation}
exists for some $r > 0$, where $E$ is the mathematical expectation. We assume further that $E[Y] \ne 0$. Let $(Y_{k})_{k\ge 1}$ be a sequence of independent and identically distributed (i.i.d.) copies of $Y$, and let $S_{k}$ denote their partial sums:
\begin{equation} \label{21}
S_{k}=Y_{1}+\cdots+Y_{k}, \quad (k \ge 1), \quad S_{0}=0.
\end{equation}
The probabilisitc degenerate Bernoulli polynomials and numbers of order $\gamma$ associated with $Y$ are respectively given by
\begin{equation} \label{22}
\bigg(\frac{t}{E[e_{\lambda}^{Y}(t)]-1}\bigg)^{\gamma}\big(E[e_{\lambda}^{Y}(t)]\big)^{x}=\sum_{n=0}^{\infty}\beta_{n,\lambda}^{(\gamma,Y)}(x)\frac{t^{n}}{n!},\quad \beta_{n,\lambda}^{(\gamma,Y)}=\beta_{n,\lambda}^{(\gamma,Y)}(0).
\end{equation} \par
The probabilistic degenerate Stirling numbers of the second kind associated with $Y$ are equivalently defined as
\begin{equation} \label{23}
\begin{aligned}
&\frac{1}{k!}\big(E[e_{\lambda}^{Y}(t)]-1 \big)^{k}=\sum_{n=k}^{\infty}S_{2,\lambda}^{Y}(n,k)\frac{t^{n}}{n!}, \quad (k \ge 0),\\
&S_{2,\lambda}^{Y}(n,k)=\frac{1}{k!}\sum_{j=0}^{k}\binom{k}{j}(-1)^{k-j}E[(S_{j})_{n,\lambda}], \quad (n \ge k),\\
&S_{2,\lambda}^{Y}(n,k)=B_{n,k}\big(E[(Y)_{1,\lambda}],E[(Y)_{2,\lambda}],\dots,E[(Y)_{n-k+1,\lambda}]\big), \quad (n \ge k).
\end{aligned}
\end{equation}
Let $e_{Y,\lambda}(t)=E[e_{\lambda}^{Y}(t)]-1$. Then the probabilistic degenerate Stirling numbers of the first kind associated with $Y$ are introduced as
\begin{equation} \label{24}
\frac{1}{k!}\big(\bar{e}_{Y,\lambda}(t)\big)^{k}=\sum_{n=k}^{\infty}S_{1,\lambda}^{Y}(n,k)\frac{t^{n}}{n!},\quad (k \ge 0),
\end{equation}
where $\bar{e}_{Y,\lambda}(t)$ is the compositional inverse of $e_{Y,\lambda}(t)$.
Among other things, from \eqref{23} and \eqref{24}, we note that $S_{1,\lambda}^{Y}(n,k)$ and $S_{2,\lambda}^{Y}(n,k)$ satisfy the orthogonality and inverse relations.
\begin{proposition}
The following orthogonality and inverse relations are valid for $S_{1,\lambda}^{Y}(n,k)$ and $S_{2,\lambda}^{Y}(n,k)$.
\begin{flalign*}
&(a)\,\, \,\sum_{k=l}^{n} S_{2,\lambda}^{Y}(n,k)S_{1,\lambda}^{Y}(k,l)=\delta_{n,l}, \quad \sum_{k=l}^{n} S_{1,\lambda}^{Y}(n,k)S_{2,\lambda}^{Y}(k,l)=\delta_{n,l}, \\
&(b)\,\, a_{n}=\sum_{k=0}^{n}S_{2,\lambda}^{Y}(n,k) b_{k}\,\, \iff \,\, b_{n}=\sum_{k=0}^{n}S_{1,\lambda}^{Y}(n,k)a_{k}, \\ 
&(c)\,\, a_{n}=\sum_{k=n}^{m}S_{2,\lambda}^{Y}(k,n)b_{k} \,\, \iff \,\, b_{n}=\sum_{k=n}^{m}S_{1,\lambda}^{Y}(k,n)a_{k}. &&
\end{flalign*}
\end{proposition}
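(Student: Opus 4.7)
The plan is to derive part (a) directly from the fact that $\bar{e}_{Y,\lambda}(t)$ and $e_{Y,\lambda}(t)$ are mutually compositional inverses, and then deduce (b) and (c) as formal matrix-inversion consequences of (a).

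First I would set $f(t)=e_{Y,\lambda}(t)$ and $\bar f(t)=\bar e_{Y,\lambda}(t)$. Since $E[Y]\ne 0$, $f$ is a delta series, so $\bar f$ exists and $\bar f(f(t))=t=f(\bar f(t))$. From the defining generating functions \eqref{23} and \eqref{24} I would write, for each $l\ge 0$,
\begin{equation*}
\frac{f(t)^{l}}{l!}=\sum_{m\ge l}S_{2,\lambda}^{Y}(m,l)\frac{t^{m}}{m!},\qquad \frac{\bar f(t)^{l}}{l!}=\sum_{m\ge l}S_{1,\lambda}^{Y}(m,l)\frac{t^{m}}{m!}.
\end{equation*}
To obtain the first orthogonality relation, I would substitute $t\mapsto f(t)$ into the series for $\bar f(t)^{l}/l!$, obtaining
\begin{equation*}
\frac{t^{l}}{l!}=\frac{\bar f(f(t))^{l}}{l!}=\sum_{k\ge l}S_{1,\lambda}^{Y}(k,l)\frac{f(t)^{k}}{k!}=\sum_{n\ge l}\frac{t^{n}}{n!}\sum_{k=l}^{n}S_{2,\lambda}^{Y}(n,k)S_{1,\lambda}^{Y}(k,l),
\end{equation*}
and then read off the coefficient of $t^{n}/n!$ to get $\sum_{k=l}^{n}S_{2,\lambda}^{Y}(n,k)S_{1,\lambda}^{Y}(k,l)=\delta_{n,l}$. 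An entirely symmetric computation with $f(\bar f(t))=t$, using the series for $f(t)^{l}/l!$, will yield the companion identity in (a).

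For (b), I would take $a_{n}=\sum_{k=0}^{n}S_{2,\lambda}^{Y}(n,k)b_{k}$, multiply by $S_{1,\lambda}^{Y}(m,n)$, sum over $n$, and interchange the order of summation, so that the inner sum collapses by (a) to $\delta_{m,k}$; the reverse direction is identical. Part (c) is the transposed inversion, proved by the same swap-and-collapse maneuver: multiply $a_{n}=\sum_{k=n}^{m}S_{2,\lambda}^{Y}(k,n)b_{k}$ by $S_{1,\lambda}^{Y}(j,n)$, sum over $n$ from the appropriate lower bound, and invoke (a). I do not anticipate a genuine obstacle; the only point requiring a little care is bookkeeping of the summation ranges in the double sums so that the orthogonality relation is applied in the correct order of indices, and verifying that the series manipulations are legitimate, which is automatic in the ring of formal power series since each coefficient only involves a finite sum.
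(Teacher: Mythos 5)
Your proposal is correct and is exactly the argument the paper has in mind: the paper simply asserts that the orthogonality and inverse relations follow from the generating-function definitions \eqref{23} and \eqref{24}, and your substitution of $f(t)$ into the series for $\bar f(t)^{l}/l!$ (and vice versa), followed by the standard swap-and-collapse deduction of (b) and (c) from (a), is the canonical way to carry that out. No gaps; the delta-series hypothesis $E[Y]\ne 0$ and the finiteness of each coefficient sum are the only points needing mention, and you address both.
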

By using the formulas (B) and (C), \eqref{22} and \eqref{24}, we show that
\begin{equation} \label{25}
\begin{aligned}
&\bar{e}_{Y,\lambda}(t)=\sum_{n=1}^{\infty}\beta_{n-1,\lambda}^{(n,Y)}\frac{t^{n}}{n!}, \\
&\frac{1}{k!}\big(\bar{e}_{Y,\lambda}(t)\big)^{k}=\sum_{n=k}^{\infty}S_{1,\lambda}^{Y}(n,k)\frac{t^{n}}{n!}=\sum_{n=k}^{\infty}\binom{n-1}{k-1}\beta_{n-k,\lambda}^{(n,Y)}\frac{t^{n}}{n!},\quad (k \ge 0).
\end{aligned}
\end{equation}
Taking the limits $\lambda \rightarrow 0$ in \eqref{25} and for $e_{Y}(t)=E[e^{Y t}]-1$, we see that
\begin{equation}\label{26}
\begin{aligned}
&\bar{e}_{Y}(t)=\sum_{n=1}^{\infty}B_{n-1}^{(n,Y)}\frac{t^{n}}{n!}, \\
&\frac{1}{k!}\big(\bar{e}_{Y}(t)\big)^{k}=\sum_{n=k}^{\infty}S_{1}^{Y}(n,k)\frac{t^{n}}{n!}=\sum_{n=k}^{\infty}\binom{n-1}{k-1}B_{n-k}^{(n,Y)}\frac{t^{n}}{n!},\quad (k \ge 0).
\end{aligned}
\end{equation}
Here the probabilisitc Bernoulli polynomials and numbers of order $\gamma$ associated with $Y$ are respectively characterized by
\begin{equation} \label{27}
\bigg(\frac{t}{E[e^{Y t}]-1}\bigg)^{\gamma}\big(E[e^{Y t}]\big)^{x}=\sum_{n=0}^{\infty}B_{n}^{(\gamma,Y)}(x)\frac{t^{n}}{n!},\quad B_{n}^{(\gamma,Y)}=B_{n}^{(\gamma,Y)}(0).
\end{equation}
The following theorem follows from \eqref{19} and \eqref{25}.
\begin{theorem}
For any integers $n \ge k \ge 0$, we have
\begin{equation*}
S_{1,\lambda}^{Y}(n,k)=\binom{n-1}{k-1}\beta_{n-k,\lambda}^{(n,Y)}=B_{n,k}\Big(\beta_{0,\lambda}^{(1,Y)},\beta_{1,\lambda}^{(2,Y)}, \dots \beta_{n-k,\lambda}^{(n-k+1,Y)} \Big).
\end{equation*}
\end{theorem}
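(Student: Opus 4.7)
The statement is essentially a repackaging of already established identities, so my plan is organized around identifying the right inputs and combining them.

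For the first equality $S_{1,\lambda}^{Y}(n,k)=\binom{n-1}{k-1}\beta_{n-k,\lambda}^{(n,Y)}$, the plan is to cite \eqref{25} directly. There the identity was obtained by applying the Lagrange inversion formula (B) to the delta series $f(t)=e_{Y,\lambda}(t)=E[e_{\lambda}^{Y}(t)]-1$, using the defining generating function \eqref{22} of the degenerate probabilistic Bernoulli numbers of order $n$, which yields $[t^n](\bar e_{Y,\lambda}(t))^k = (k/n)\beta_{n-k,\lambda}^{(n,Y)}/(n-k)!$. Multiplying by $n!/k!$ and rewriting the factorial coefficient as $\binom{n-1}{k-1}$ is the content of the first equality. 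So this half requires only invoking \eqref{25}.

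For the second equality, the plan is to recognize $\bar e_{Y,\lambda}(t)$ as a delta series whose $m$-th Taylor coefficient is exactly $\beta_{m-1,\lambda}^{(m,Y)}$ and feed this into the partial Bell polynomial generating function \eqref{19}. Concretely, from the first line of \eqref{25},
\begin{equation*}
\bar e_{Y,\lambda}(t)=\sum_{m=1}^{\infty}\beta_{m-1,\lambda}^{(m,Y)}\frac{t^{m}}{m!},
\end{equation*}
so setting $x_m=\beta_{m-1,\lambda}^{(m,Y)}$ in \eqref{19} gives
\begin{equation*}
\frac{1}{k!}\big(\bar e_{Y,\lambda}(t)\big)^{k}=\sum_{n=k}^{\infty}B_{n,k}\!\big(\beta_{0,\lambda}^{(1,Y)},\beta_{1,\lambda}^{(2,Y)},\dots,\beta_{n-k,\lambda}^{(n-k+1,Y)}\big)\frac{t^{n}}{n!}.
\end{equation*}
Comparing coefficients of $t^n/n!$ with the defining expansion \eqref{24} of $S_{1,\lambda}^{Y}(n,k)$ yields the claimed identity.

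There is really no hard step here; the proof is a one-line comparison of coefficients once the correct substitution is made. The only thing worth being careful about is the bookkeeping of indices: the $m$-th coefficient of $\bar e_{Y,\lambda}(t)$ is $\beta_{m-1,\lambda}^{(m,Y)}$, so the Bell arguments run through $\beta_{0,\lambda}^{(1,Y)},\beta_{1,\lambda}^{(2,Y)},\dots,\beta_{n-k,\lambda}^{(n-k+1,Y)}$ rather than, for instance, $\beta_{0,\lambda}^{(0,Y)},\dots$, and the highest index $n-k+1$ on $\beta$ matches the highest argument index $n-k+1$ appearing in the partial Bell polynomial $B_{n,k}$. Assembling the two equalities into one chain gives the theorem.
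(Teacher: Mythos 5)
Your proposal is correct and follows exactly the route the paper intends: the first equality is read off from \eqref{25} (itself obtained by applying formula (B) to $e_{Y,\lambda}(t)$ together with \eqref{22}), and the second follows by substituting $x_m=\beta_{m-1,\lambda}^{(m,Y)}$ from the first line of \eqref{25} into the partial Bell generating function \eqref{19} and comparing with \eqref{24}. The index bookkeeping you flag is handled correctly.
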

In recent years, various degenerate versions of many special numbers and polynomials have been studied, yielding significant identities and combinatorial properties (see [9,11,12,17,19] and the references therein). 
For example, we found the degenerate Stirling numbers of the first kind and the second kind which turned out to be very important in studying degenerate versions of special polynomials and numbers. This exploration of degenerate versions originated with Carlitz's work on degenerate Bernoulli and Euler polynomials (see [5]). Notably, this area of study has expanded beyond polynomials and numbers to encompass transcendental functions (see [13,14]) and umbral calculus (see [8]). \par
Concurrently, probabilistic extensions of special polynomials and numbers have emerged as a vibrant intersection of combinatorics, probability theory, and mathematical physics (see [4,7,15,16,18,20,21,26]). Driven by the need to model complex stochastic systems, mathematicians have increasingly focused on ``probabilistic" versions of classical sequences, such as the Bernoulli, Euler, Bell, and Stirling polynomials, by associating them with specific random variables $Y$ through their moment-generating functions. \par
Recently, Adell and Lekuona introduced the probabilistic Stirling numbers of the second kind associated with a random variable $Y$, denoted by $S_{2}^{Y}(n,k)$ (see [1,3,20]). Subsequently, Adell and B\'{e}nyi defined the probabilistic Stirling numbers of the first kind associated with $Y$, $s_{Y}(n,k)$, by employing the cumulant generating function (see [2]). However, these versions of $S_{2}^{Y}(n,k)$ and $s_{Y}(n,k)$ fail to satisfy the standard orthogonality, limiting their utility in inversion formulas. Furthermore, $s_{Y}(n,k)$ does not reduce to the classical Stirling numbers of the first kind, $S_{1}(n,k)$, in the case where $Y=1$. To address these shortcomings, a redefined version of the probabilistic Stirling numbers of the first kind, denoted $S_{1}^{Y}(n,k)$, was introduced in [7,26]. These redefined numbers, together with $S_{2}^{Y}(n,k)$, satisfy the expected orthogonality and inverse relations, and $S_{1}^{Y}(n,k)$ correctly reduces to $S_{1}(n,k)$ when $Y=1$. Similarly, their degenerate counterparts--the probabilistic degenerate Stirling numbers of the second kind $S_{2,\lambda}^{Y}(n,k)$ and of the first kind $S_{1,\lambda}^{Y}(n,k)$--were considered in [7]. These also satisfy orthogonality and inverse relations (see Proposition 1.1), with $S_{1,\lambda}^{Y}(n,k)$ reducing to the degenerate Stirling numbers of the first kind $S_{1,\lambda}(n,k)$ when $Y=1$. We note that an alternative definition for $S_{1,\lambda}^{Y}(n,k)$ was proposed in [16] using the degenerate cumulant generating function; however, that version does not satisfy the orthogonality and inverse relations when paired with $S_{2,\lambda}^{Y}(n,k)$. \par
The aim of this paper is twofold. First, we define the probabilistic logarithm associated with $Y$, $\log^{Y}(1+t)$, and the probabilistic degenerate logarithm associated with $Y$, $\log_{\lambda}^{Y}(1+t)$. These are defined such that for any integer $k \ge 0$, the following generating functions hold:
\begin{equation*}
\frac{1}{k!}\Big(\log_{\lambda}^{Y}(1+t)\Big)^{k}=\sum_{n=k}^{\infty}S_{1,\lambda}^{Y}(n,k)\frac{t^{n}}{n!}, \,\,\, \frac{1}{k!}\Big(\log^{Y}(1+t)\Big)^{k}=\sum_{n=k}^{\infty}S_{1}^{Y}(n,k)\frac{t^{n}}{n!},\,\, (k \ge 0).
\end{equation*}
Specifically, for $k=1$, our definitions for the probabilistic logarithms (see \eqref{42}) are:
\begin{equation*}
\log_{\lambda}^{Y}(1+t)=\sum_{n=1}^{\infty}S_{1,\lambda}^{Y}(n,1)\frac{t^{n}}{n!}, \quad \log^{Y}(1+t)=\sum_{n=1}^{\infty}S_{1}^{Y}(n,1)\frac{t^{n}}{n!}.
\end{equation*}
In Section 5, we provide explicit expressions for these logarithms for various discrete and continuous random variables. Furthermore, using our definition of the probabilistic degenerate logarithm, we introduce the probabilistic degenerate Daehee numbers of order $\gamma$ associated with $Y$ (see \eqref{43}, \eqref{44}), $D_{n,\lambda}^{(\gamma,Y)}$, and the probabilistic degenerate Cauchy numbers of order $\gamma$ associated with $Y$ (see \eqref{45}, \eqref{46}), $C_{n,\lambda}^{(\gamma,Y)}$. For the case $\gamma=1$, the probabilistic degenerate Daehee numbers associated with $Y$ are defined by:
\begin{equation*}
\frac{\log_{\lambda}^{Y}(1+t)}{t}=\sum_{n=0}^{\infty}D_{n,\lambda}^{Y}\frac{t^{n}}{n!}.
\end{equation*}
We note that previous definitions for probabilistic degenerate Daehee polynomials $D_{n,\lambda}^{Y}(x)$ and $d_{n,\lambda}^{Y}(x)$ (see [22,27]) exist; however, they possess a significant limitation: for $x=0$, both $D_{n,\lambda}^{Y}(0)$ and $d_{n,\lambda}^{Y}(0)$ become independent of the random variable $Y$, thereby losing their probabilistic character.  \par
Second, we investigate the probabilistic heterogeneous Stirling numbers of the second kind, $H_{\lambda}^{Y}(n,k)$, and of the first kind, $G_{\lambda}^{Y}(n,k)$. These satisfy orthogonality and inverse relations, and reduce to $H_{\lambda}(n,k)$ and $G_{\lambda}(n,k)$ when $Y=1$. While a probabilistic version of the Schl\"omilch formula--expressing $S_{1}^{Y}(n,k)$ in terms of $S_{2}^{Y}(n,k)$--was shown in [26], we demonstrate that a probabilistic degenerate version also holds. Specifically, we show that the Schl\"omilch identity remains valid when $S_{1}^{Y}(n,k)$ and $S_{2}^{Y}(n-k+j,j)$ are replaced by $S_{1,\lambda}^{Y}(n,k)$ and $S_{2,\lambda}^{Y}(n-k+j,j)$, or by $G_{\lambda}^{Y}(n,k)$ and $H_{\lambda}^{Y}(n-k+j,j)$. This result further yields an expression for $\log_{\lambda}^{Y}(1+t)$ in terms of the probabilistic degenerate Stirling numbers of the second kind. While probabilistic heterogeneous Stirling numbers of the second kind are discussed in [18], those of the first kind have not yet been addressed. \par

\section{Probabilistic heterogeneous Stirling numbers}

The probabilistic heterogeneous Stirling numbers of the second kind associated with $Y$ are defined by
\begin{equation} \label{28}
\frac{1}{k!}\Big(E[e_{-\lambda}^{Y}(t)]-1\Big)^{k}=\frac{1}{k!}\Big(E[e_{\lambda}^{-Y}(-t)]-1\Big)^{k}=\sum_{n=k}^{\infty}H_{\lambda}^{Y}(n,k) \frac{t^{n}}{n!}, \,\,(k \ge 0),
\end{equation}
which implies that
\begin{equation} \label{29}
H_{\lambda}^{Y}(n,k)=(-1)^{n}S_{2,\lambda}^{-Y}(n,k)=S_{2,-\lambda}^{Y}(n,k), \quad (n \ge k).
\end{equation}
We observe that
\begin{equation} \label{30}
\begin{aligned}
\sum_{n=k}^{\infty}H_{\lambda}^{Y}(n,k) \frac{t^{n}}{n!}&=\frac{1}{k!}\Big(E[e_{-\lambda}^{Y}(t)]-1\Big)^{k}
=\frac{1}{k!}\sum_{j=0}^{k}\binom{k}{j}(-1)^{k-j}E[e_{-\lambda}^{S_{j}}(t)] \\
&=\frac{1}{k!}\sum_{j=0}^{k}\binom{k}{j}(-1)^{k-j}\sum_{n=0}^{\infty}E[(S_{j})_{n,-\lambda}]\frac{t^{n}}{n!} \\
&=\frac{1}{k!}\sum_{j=0}^{k}\binom{k}{j}(-1)^{k-j}\sum_{n=0}^{\infty}E[\langle S_{j}\rangle_{n,\lambda}]\frac{t^{n}}{n!} \\
&=\sum_{n=0}^{\infty}\frac{1}{k!}\sum_{j=0}^{k}\binom{k}{j}(-1)^{k-j}E[\langle S_{j}\rangle_{n,\lambda}]\frac{t^{n}}{n!}.
\end{aligned}
\end{equation}
Moreover, from \eqref{19}, we note
\begin{equation} \label{31}
\begin{aligned}
\sum_{n=k}^{\infty}H_{\lambda}^{Y}(n,k) \frac{t^{n}}{n!}&=\frac{1}{k!}\bigg(\sum_{m=1}^{\infty}E[\langle Y \rangle_{m,\lambda}]\frac{t^{m}}{m!}\bigg)^{k} \\
&=\sum_{n=k}^{\infty}B_{n,k}(E[\langle Y \rangle_{1,\lambda}],E[\langle Y \rangle_{2,\lambda}],\dots, E[\langle Y \rangle_{n-k+1,\lambda}])\frac{t^{n}}{n!}.
\end{aligned}
\end{equation}
Now, from \eqref{29}, \eqref{30} and \eqref{31}, we derive the following result.
\begin{theorem}
For any integers $n \ge k \ge 0$, we have
\begin{align*}
H_{\lambda}^{Y}(n,k)&=(-1)^{n}S_{2,\lambda}^{-Y}(n,k)=S_{2,-\lambda}^{Y}(n,k)\\
&=\frac{1}{k!}\sum_{j=0}^{k}\binom{k}{j}(-1)^{k-j}E[\langle S_{j}\rangle_{n,\lambda}] \\
&=B_{n,k}(E[\langle Y \rangle_{1,\lambda}],E[\langle Y \rangle_{2,\lambda}],\dots, E[\langle Y \rangle_{n-k+1,\lambda}]).
\end{align*}
\end{theorem}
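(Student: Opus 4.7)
The plan is to establish the three equalities in turn by direct manipulation of the defining generating function \eqref{28}, in each case extracting the coefficient of $t^{n}/n!$ and invoking tools already set up in the paper.

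First, for the identity chain $H_{\lambda}^{Y}(n,k)=(-1)^{n}S_{2,\lambda}^{-Y}(n,k)=S_{2,-\lambda}^{Y}(n,k)$, I would compare \eqref{28} with the first line of \eqref{23}. Since $e_{-\lambda}^{Y}(t)=e_{\lambda}^{-Y}(-t)$ by direct computation from $e_{\lambda}^{x}(t)=(1+\lambda t)^{x/\lambda}$, the left-hand side of \eqref{28} is simultaneously the generating function for $S_{2,-\lambda}^{Y}(n,k)$ (the substitution $\lambda \mapsto -\lambda$ in \eqref{23}) and, after the substitutions $t \mapsto -t$ and $Y \mapsto -Y$, the generating function for $(-1)^{n}S_{2,\lambda}^{-Y}(n,k)$. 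Comparing coefficients of $t^{n}/n!$ then gives both identifications at once.

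For the sum $\frac{1}{k!}\sum_{j=0}^{k}\binom{k}{j}(-1)^{k-j}E[\langle S_{j}\rangle_{n,\lambda}]$, the plan is the computation essentially already displayed in \eqref{30}: expand $(E[e_{-\lambda}^{Y}(t)]-1)^{k}$ by the binomial theorem, invoke the i.i.d.\ property of $(Y_{i})$ to replace $(E[e_{-\lambda}^{Y}(t)])^{j}$ by $E[e_{-\lambda}^{S_{j}}(t)]$, expand each expectation as $\sum_{n} E[(S_{j})_{n,-\lambda}]\,t^{n}/n!$, and apply the conversion $(x)_{n,-\lambda}=\langle x \rangle_{n,\lambda}$. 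Matching coefficients with those on the left of \eqref{28} yields the claimed formula. For the partial Bell polynomial representation, the plan is to rewrite $E[e_{-\lambda}^{Y}(t)]-1 = \sum_{m=1}^{\infty} E[\langle Y\rangle_{m,\lambda}]\,t^{m}/m!$ (a delta series, since the hypothesis $E[Y]\ne 0$ makes the leading coefficient nonzero) and then invoke the generating function \eqref{19} for the partial Bell polynomials with $x_{m}=E[\langle Y \rangle_{m,\lambda}]$; this is exactly the computation carried out in \eqref{31}.

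The technical content is modest: the only real bookkeeping is tracking signs through the substitutions $\lambda \mapsto -\lambda$, $t \mapsto -t$, $Y\mapsto -Y$, and applying $(x)_{n,-\lambda}=\langle x \rangle_{n,\lambda}$ consistently. No substantive obstacle is expected, since \eqref{30} and \eqref{31} already perform two of the three computations essentially in full; the theorem ultimately reduces to the observation that the three displayed expansions of one and the same generating function must have equal coefficients of $t^{n}/n!$.
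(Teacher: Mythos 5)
Your proposal is correct and matches the paper's own derivation: the paper obtains the theorem precisely by reading off \eqref{29} from the two forms of the generating function in \eqref{28}, and then performing the binomial/i.i.d. expansion in \eqref{30} and the partial Bell polynomial expansion via \eqref{19} in \eqref{31}. No substantive difference.
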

We let
\begin{equation} \label{32}
\mathcal{E}_{Y,\lambda}(t)=E[e_{\lambda}^{-Y}(-t)]-1=E[e_{-\lambda}^{Y}(t)]-1. 
\end{equation}
The {\it{probabilistic heterogeneous Stirling numbers of the first kind associated with $Y$}} are specified as
\begin{equation} \label{33}
\frac{1}{k!}\Big(\bar{\mathcal{E}}_{Y,\lambda}(t)\Big)^{k}=\sum_{n=k}^{\infty}G_{\lambda}^{Y}(n,k)\frac{t^{n}}{n!},\quad (k \ge 0).
\end{equation}
By using the formula (C), \eqref{22} and the first expression in \eqref{32}, we derive that
\begin{equation} \label{34}
\begin{aligned}
[t^{n}]\,\bar{\mathcal{E}}_{Y,\lambda}(t)&=\frac{1}{n}(-1)^{n}[t^{n-1}]\,\bigg(\frac{-t}{E[e_{\lambda}^{-Y}(-t)]-1}\bigg)^{n} \\
&=\frac{1}{n}(-1)^{n}[t^{n-1}]\,\sum_{m=0}^{\infty}\beta_{m,\lambda}^{(n,-Y)}(-1)^{m}\frac{t^{m}}{m!}=-\beta_{n-1,\lambda}^{(n,-Y)}\frac{1}{n!}.
\end{aligned}
\end{equation}
Also, by using the formula (C), \eqref{22} and the second expression in \eqref{32}, we find that
\begin{equation} \label{35}
\begin{aligned}
[t^{n}]\,\bar{\mathcal{E}}_{Y,\lambda}(t)&=\frac{1}{n}[t^{n-1}]\,\bigg(\frac{t}{E[e_{-\lambda}^{Y}(t)]-1}\bigg)^{n} \\
&=\frac{1}{n}[t^{n-1}]\,\sum_{m=0}^{\infty}\beta_{m,-\lambda}^{(n,Y)}\frac{t^{m}}{m!}=\beta_{n-1,-\lambda}^{(n,Y)}\frac{1}{n!}.
\end{aligned}
\end{equation}
Thus, from \eqref{34} and \eqref{35}, we deduce that
\begin{equation} \label{36}
\bar{\mathcal{E}}_{Y,\lambda}(t)=-\sum_{n=1}^{\infty}\beta_{n-1,\lambda}^{(n,-Y)}\frac{t^{n}}{n!}=\sum_{n=1}^{\infty}\beta_{n-1,-\lambda}^{(n,Y)}\frac{t^{n}}{n!},
\end{equation}
and hence, from \eqref{36}, we obtain
\begin{equation} \label{37}
\begin{aligned}
\frac{1}{k!}\Big( \bar{\mathcal{E}}_{Y,\lambda}(t) \Big)^{k}&=\frac{1}{k!}\bigg(\sum_{m=1}^{\infty}-\beta_{m-1,\lambda}^{(m,-Y)}\frac{t^{m}}{m!} \bigg)^{k} \\
&=\frac{1}{k!}\bigg(\sum_{m=1}^{\infty}\beta_{m-1,-\lambda}^{(m,Y)}\frac{t^{m}}{m!} \bigg)^{k},\quad (k \ge 0).
\end{aligned}
\end{equation}
In addition, using the formula (B) and the first and second expressions in \eqref{32}, we show that
\begin{equation} \label{38}
\begin{aligned}
\frac{1}{k!}\Big( \bar{\mathcal{E}}_{Y,\lambda}(t) \Big)^{k}&=\sum_{n=k}^{\infty}\binom{n-1}{k-1}(-1)^{k}\beta_{n-k,\lambda}^{(n,-Y)}\frac{t^{n}}{n!} \\
&=\sum_{n=k}^{\infty}\binom{n-1}{k-1}\beta_{n-k,-\lambda}^{(n,Y)}\frac{t^{n}}{n!},\quad (k \ge 0).
\end{aligned}
\end{equation}
From \eqref{19}, \eqref{33}, \eqref{37}, \eqref{38} and Theorem 2.2, we obtain the following theorem.
\begin{theorem}
For any integers $n \ge k \ge 0$, we have
\begin{align*}
G_{\lambda}^{Y}(n,k)&=B_{n,k}\big(-\beta_{0,\lambda}^{(1,-Y)},-\beta_{1,\lambda}^{(2,-Y)},\dots,-\beta_{n-k,\lambda}^{(n-k+1,-Y)}\big) \\
&=B_{n,k}\big(\beta_{0,-\lambda}^{(1,Y)},\beta_{1,-\lambda}^{(2,Y)},\dots,\beta_{n-k,-\lambda}^{(n-k+1,Y)}\big) \\
&=(-1)^{k}\binom{n-1}{k-1}\beta_{n-k,\lambda}^{(n,-Y)}=(-1)^{k}S_{1,\lambda}^{-Y}(n,k) \\
&=\binom{n-1}{k-1}\beta_{n-k,-\lambda}^{(n,Y)}=S_{1,-\lambda}^{Y}(n,k).
\end{align*}
\end{theorem}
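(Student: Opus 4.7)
The plan is to assemble Theorem 2.3 directly from the identities \eqref{33}, \eqref{37}, \eqref{38}, the Bell polynomial generating function \eqref{19}, and the known expressions for $S_{1,\lambda}^{Y}(n,k)$ collected in Theorem 1.2 (equation \eqref{25}). Every equivalence is essentially a matter of reading off coefficients from a series identity already established in the paper, so the theorem is a compact summary of what was proved in \eqref{36}--\eqref{38}.

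First I would obtain the two Bell polynomial forms. Starting from \eqref{37}, each of the two expressions displays $\frac{1}{k!}\big(\bar{\mathcal{E}}_{Y,\lambda}(t)\big)^{k}$ as the $k$th power of a delta series whose coefficients are $-\beta_{m-1,\lambda}^{(m,-Y)}$ in one case and $\beta_{m-1,-\lambda}^{(m,Y)}$ in the other. Comparing these with the partial Bell polynomial generating function \eqref{19} and then matching coefficients of $t^{n}/n!$ with the definition \eqref{33} immediately yields
\[
G_{\lambda}^{Y}(n,k)=B_{n,k}\bigl(-\beta_{0,\lambda}^{(1,-Y)},\dots,-\beta_{n-k,\lambda}^{(n-k+1,-Y)}\bigr)=B_{n,k}\bigl(\beta_{0,-\lambda}^{(1,Y)},\dots,\beta_{n-k,-\lambda}^{(n-k+1,Y)}\bigr).
\]

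Next I would extract the binomial-coefficient closed forms by comparing \eqref{38} with \eqref{33}: reading the coefficient of $t^{n}/n!$ on both sides of \eqref{38} gives
\[
G_{\lambda}^{Y}(n,k)=(-1)^{k}\binom{n-1}{k-1}\beta_{n-k,\lambda}^{(n,-Y)}=\binom{n-1}{k-1}\beta_{n-k,-\lambda}^{(n,Y)}.
\]
Finally, to close the chain of identifications with $S_{1,\lambda}^{-Y}(n,k)$ and $S_{1,-\lambda}^{Y}(n,k)$, I invoke the expression $S_{1,\lambda}^{Y}(n,k)=\binom{n-1}{k-1}\beta_{n-k,\lambda}^{(n,Y)}$ from \eqref{25} (equivalently Theorem 1.2), and substitute $Y\mapsto -Y$ to obtain $S_{1,\lambda}^{-Y}(n,k)=\binom{n-1}{k-1}\beta_{n-k,\lambda}^{(n,-Y)}$, and $\lambda\mapsto -\lambda$ to obtain $S_{1,-\lambda}^{Y}(n,k)=\binom{n-1}{k-1}\beta_{n-k,-\lambda}^{(n,Y)}$; these identifications give the last two equalities of the theorem.

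The only delicate point is bookkeeping the sign $(-1)^{k}$ that appears in one of the two forms in \eqref{38}: it arises because the first expression in \eqref{32} contains $e_{\lambda}^{-Y}(-t)-1$, so applying Lagrange inversion (B) to $f(t)=E[e_{\lambda}^{-Y}(-t)]-1$ produces an extra factor $(-1)^{n}$ that, upon combining with the $(-1)^{n-k}$ produced when writing $\beta_{n-k,\lambda}^{(n,-Y)}$ from the expansion of $(t/(E[e_{\lambda}^{-Y}(-t)]-1))^{n}$, leaves the net $(-1)^{k}$. Since this sign calculation has already been carried out implicitly in passing from \eqref{34} to \eqref{36} and from \eqref{36} to \eqref{38}, I expect no further obstacle; the proof is then just a two-line citation of \eqref{19}, \eqref{25}, \eqref{33}, \eqref{37}, and \eqref{38}.
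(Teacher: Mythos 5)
Your proposal is correct and follows exactly the route the paper takes: the paper derives this theorem by combining \eqref{19}, \eqref{33}, \eqref{37}, \eqref{38} and the closed form $S_{1,\lambda}^{Y}(n,k)=\binom{n-1}{k-1}\beta_{n-k,\lambda}^{(n,Y)}$ from \eqref{25} (with $Y\mapsto -Y$ and $\lambda\mapsto -\lambda$), which is precisely your argument, and your accounting of the $(-1)^{k}$ sign is also the same as the computation leading from \eqref{34} to \eqref{38}. (The only quibble is a label slip: the closed form you cite is the paper's Theorem 2.2, not 1.2.)
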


The next two lemmas and Theorem 3.6 can be proved just as those in Lemma 2.1, Lemma 2.2 and Theorem 2.1 of [26]. The details are left to the reader.
\begin{lemma}
 For any integers $n \ge k \ge 0$, we have
\begin{align*}
B_{n,k}&\Big(\frac{E[(Y)_{2,\lambda}]}{2},\frac{E[(Y)_{3,\lambda}]}{3},\dots,\frac{E[(Y)_{n-k+2,\lambda}]}{n-k+2}\Big) \\
&=\sum_{j=0}^{k}\binom{n+k}{k-j}\frac{n!}{(n+k)!}\big(-E[Y]\big)^{k-j}S_{2,\lambda}^{Y}(n+j,j).
\end{align*}
\end{lemma}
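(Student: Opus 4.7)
The plan is to combine the generating-function definition \eqref{19} of the partial Bell polynomials with the defining series \eqref{23} of $S_{2,\lambda}^{Y}(n,k)$, and to recognize the argument sequence $E[(Y)_{m+1,\lambda}]/(m+1)$ as a shifted moment sum. First I would observe, by peeling off the $n=0$ and $n=1$ terms from $E[e_{\lambda}^{Y}(t)]=\sum_{n\ge 0}E[(Y)_{n,\lambda}]t^{n}/n!$, that
$$\sum_{m=1}^{\infty}\frac{E[(Y)_{m+1,\lambda}]}{m+1}\frac{t^{m}}{m!}=\frac{1}{t}\sum_{m=1}^{\infty}\frac{E[(Y)_{m+1,\lambda}]}{(m+1)!}t^{m+1}=\frac{1}{t}\Big(E[e_{\lambda}^{Y}(t)]-1-E[Y]\,t\Big).$$
This is the key identification that makes the shifted arguments tractable.

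Raising this generating function to the $k$-th power and invoking \eqref{19} with $x_{m}=E[(Y)_{m+1,\lambda}]/(m+1)$, the generating function for the left-hand side of the lemma becomes
$$\sum_{n\ge k}B_{n,k}\!\Big(\tfrac{E[(Y)_{2,\lambda}]}{2},\dots,\tfrac{E[(Y)_{n-k+2,\lambda}]}{n-k+2}\Big)\frac{t^{n}}{n!}=\frac{1}{t^{k}}\cdot\frac{1}{k!}\Big(E[e_{\lambda}^{Y}(t)]-1-E[Y]\,t\Big)^{k}.$$
Next I would expand the $k$-th power on the right by the binomial theorem,
$$\frac{1}{k!}\Big(E[e_{\lambda}^{Y}(t)]-1-E[Y]\,t\Big)^{k}=\sum_{j=0}^{k}\frac{(-E[Y]\,t)^{k-j}}{(k-j)!}\cdot\frac{1}{j!}\Big(E[e_{\lambda}^{Y}(t)]-1\Big)^{j},$$
and replace each factor $\frac{1}{j!}(E[e_{\lambda}^{Y}(t)]-1)^{j}$ by $\sum_{m\ge j}S_{2,\lambda}^{Y}(m,j)t^{m}/m!$ using \eqref{23}.

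Finally I would divide by $t^{k}$, reindex the inner sum by $n=m-j$, and extract the coefficient of $t^{n}/n!$ on both sides. The combinatorial identity $\frac{n!}{(k-j)!(n+j)!}=\binom{n+k}{k-j}/(n+k)!$ then converts the raw coefficient into exactly $\binom{n+k}{k-j}\frac{n!}{(n+k)!}(-E[Y])^{k-j}S_{2,\lambda}^{Y}(n+j,j)$, which is the stated right-hand side. I do not expect a real obstacle here, since the argument is a routine generating-function manipulation; the only step requiring care is the initial rewriting of the shifted-argument series, because it is the presence of the two subtracted terms $1$ and $E[Y]\,t$ that is ultimately responsible for the binomial coefficient $\binom{n+k}{k-j}$ and the weight $(-E[Y])^{k-j}$ appearing in the final formula.
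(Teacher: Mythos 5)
Your proof is correct, and it is essentially the argument the paper has in mind: the paper omits the proof and defers to Lemma 2.1 of [26], whose derivation of the non-degenerate analogue is exactly this identification of the shifted-moment generating function with $\frac{1}{t}\big(E[e^{Yt}]-1-E[Y]t\big)$ followed by the binomial expansion and coefficient extraction. Your degenerate version carries over verbatim, and the final binomial-coefficient simplification checks out.
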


\begin{lemma}
For any integer $n \ge 0$, the following identity holds true.
\begin{align*}
\beta_{n,\lambda}^{(\gamma,Y)}=\sum_{k=0}^{n}(-\gamma)_{k}E[Y]^{-\gamma-k} B_{n,k}\Big(\frac{E[(Y)_{2,\lambda}]}{2},\frac{E[(Y)_{3,\lambda}]}{3},\dots,\frac{E[(Y)_{n-k+2,\lambda}]}{n-k+2}\Big). \\
\end{align*}
\end{lemma}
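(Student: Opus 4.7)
The plan is to read the generating function of $\beta_{n,\lambda}^{(\gamma,Y)}$ as an $(-\gamma)$-th power of a formal series whose constant term is $E[Y]$, factor out $E[Y]$, and then recognize a composition with the partial Bell polynomials' generating function. I will proceed in four steps.

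First, I would rewrite
\begin{equation*}
\frac{E[e_{\lambda}^{Y}(t)]-1}{t}=\sum_{m=0}^{\infty}\frac{E[(Y)_{m+1,\lambda}]}{m+1}\cdot\frac{t^{m}}{m!}=E[Y]+\sum_{m=1}^{\infty}\frac{E[(Y)_{m+1,\lambda}]}{m+1}\cdot\frac{t^{m}}{m!}.
\end{equation*}
Setting $u(t)=\dfrac{1}{E[Y]}\sum_{m=1}^{\infty}\dfrac{E[(Y)_{m+1,\lambda}]}{m+1}\cdot\dfrac{t^{m}}{m!}$, which is a formal power series with zero constant term (this is the point where the hypothesis $E[Y]\neq 0$ is essential), we obtain
\begin{equation*}
\Big(\tfrac{t}{E[e_{\lambda}^{Y}(t)]-1}\Big)^{\gamma}=E[Y]^{-\gamma}\big(1+u(t)\big)^{-\gamma}.
\end{equation*}

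Second, I would expand the outer factor via the binomial series for formal power series, which is well-defined since $u(0)=0$,
\begin{equation*}
\big(1+u(t)\big)^{-\gamma}=\sum_{k=0}^{\infty}\binom{-\gamma}{k}u(t)^{k}=\sum_{k=0}^{\infty}\frac{(-\gamma)_{k}}{k!}u(t)^{k}.
\end{equation*}
Third, by the generating function \eqref{19} of the partial Bell polynomials applied to $x_{m}=E[(Y)_{m+1,\lambda}]/(m+1)$, I get
\begin{equation*}
\frac{1}{k!}u(t)^{k}=E[Y]^{-k}\sum_{n=k}^{\infty}B_{n,k}\Big(\tfrac{E[(Y)_{2,\lambda}]}{2},\tfrac{E[(Y)_{3,\lambda}]}{3},\dots,\tfrac{E[(Y)_{n-k+2,\lambda}]}{n-k+2}\Big)\frac{t^{n}}{n!}.
\end{equation*}

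Finally, I would assemble the pieces, interchange the $k$- and $n$-summations (legitimate for formal power series since for fixed $n$ only finitely many $k\le n$ contribute), and compare coefficients of $t^{n}/n!$ with the defining expansion \eqref{22} to read off the claimed identity. No step is truly difficult; the only thing to be careful about is bookkeeping --- ensuring the exponent $-\gamma-k$ on $E[Y]$ comes out correctly from combining $E[Y]^{-\gamma}$ with the $E[Y]^{-k}$ produced by pulling $1/E[Y]$ out of $u(t)^{k}$, and that the arguments of $B_{n,k}$ are indexed starting from $E[(Y)_{2,\lambda}]/2$ (not $E[(Y)_{1,\lambda}]$), a consequence of factoring out precisely the constant term before expanding.
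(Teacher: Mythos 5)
Your proof is correct and is exactly the argument the paper has in mind: the paper omits the proof, deferring to Lemma 2.2 of [26], which proceeds by the same factorization of $E[Y]$ out of $\big(E[e_{\lambda}^{Y}(t)]-1\big)/t$, followed by the formal binomial series $\binom{-\gamma}{k}=(-\gamma)_{k}/k!$ and the generating function \eqref{19} of the partial Bell polynomials. All the bookkeeping checks out, including the exponent $-\gamma-k$ on $E[Y]$ and the shifted arguments $E[(Y)_{m+1,\lambda}]/(m+1)$ of $B_{n,k}$.
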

Combining the above two lemmas, we arrive at the next result.
\begin{theorem}
For any integer $n \ge 0$, the following identity holds true.
\begin{equation*}
\beta_{n,\lambda}^{(\gamma,Y)}=\sum_{k=0}^{n}\sum_{j=0}^{k}\binom{\gamma+k-1}{k}\binom{k}{j}\binom{n+j}{j}^{-1}(-1)^{j}E[Y]^{-\gamma-j}S_{2,\lambda}^{Y}(n+j,j).
\end{equation*}
\end{theorem}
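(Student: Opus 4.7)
The plan is to combine Lemmas 3.4 and 3.5 mechanically and then repackage the resulting constants so that they take the shape asserted in the theorem. Since Lemma 3.5 gives a single-sum expansion of $\beta_{n,\lambda}^{(\gamma,Y)}$ in which the coefficient of $(-\gamma)_{k}E[Y]^{-\gamma-k}$ is exactly the partial Bell polynomial that Lemma 3.4 rewrites in terms of $S_{2,\lambda}^{Y}(n+j,j)$, direct substitution yields a double sum
\[
\beta_{n,\lambda}^{(\gamma,Y)}=\sum_{k=0}^{n}\sum_{j=0}^{k}(-\gamma)_{k}\,E[Y]^{-\gamma-k}\binom{n+k}{k-j}\frac{n!}{(n+k)!}\bigl(-E[Y]\bigr)^{k-j}S_{2,\lambda}^{Y}(n+j,j).
\]
All that remains is to show that the product of the $k$-dependent constants equals $\binom{\gamma+k-1}{k}\binom{k}{j}\binom{n+j}{j}^{-1}(-1)^{j}E[Y]^{-\gamma-j}$.

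First I would handle the signs and powers of $E[Y]$ simultaneously: using $(-\gamma)_{k}=(-1)^{k}\langle\gamma\rangle_{k}=(-1)^{k}k!\binom{\gamma+k-1}{k}$, the factor $(-1)^{k}$ from $(-\gamma)_{k}$ combines with $(-1)^{k-j}$ from $(-E[Y])^{k-j}$ to give $(-1)^{j}$, while $E[Y]^{-\gamma-k}\cdot E[Y]^{k-j}=E[Y]^{-\gamma-j}$. Next I would collapse the remaining factorials: writing $\binom{n+k}{k-j}=\frac{(n+k)!}{(k-j)!(n+j)!}$, the cluster $\binom{n+k}{k-j}\frac{n!}{(n+k)!}$ reduces to $\frac{n!}{(k-j)!(n+j)!}$, and multiplying by the leftover $k!$ produces
\[
\frac{k!\,n!}{(k-j)!(n+j)!}=\binom{k}{j}\frac{j!\,n!}{(n+j)!}=\binom{k}{j}\binom{n+j}{j}^{-1}.
\]
Multiplying the two pieces together gives precisely the coefficient $\binom{\gamma+k-1}{k}\binom{k}{j}\binom{n+j}{j}^{-1}(-1)^{j}E[Y]^{-\gamma-j}$, which completes the identification.

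There is no substantive obstacle here; the only thing that could go wrong is bookkeeping on the constants, particularly the conversion $(-\gamma)_{k}=(-1)^{k}k!\binom{\gamma+k-1}{k}$ (which is valid for arbitrary complex $\gamma$ with the usual generalized binomial) and the cancellation involving $(n+k)!$. With those two manipulations carried out correctly, the theorem follows immediately by substituting Lemma 3.4 into Lemma 3.5 and rearranging.
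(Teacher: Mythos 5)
Your proposal is correct and is exactly the paper's argument: the paper obtains Theorem 3.5 by combining Lemmas 3.4 and 3.5 (it states this without detail), and your bookkeeping — the identity $(-\gamma)_{k}=(-1)^{k}k!\binom{\gamma+k-1}{k}$, the sign cancellation to $(-1)^{j}$, and the collapse of $\binom{n+k}{k-j}\frac{n!}{(n+k)!}\,k!$ to $\binom{k}{j}\binom{n+j}{j}^{-1}$ — checks out.
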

Again, the following probabilistic degenerate version of the classical Schl\"omilch formula for the probabilisitc Stirling numbers of both kinds is obtained by proceeding just as in the proof of Theorem 3.1 of [26]. However, for the sake of completeness, we give the short proof. For this, we need the following identities:
\begin{equation} \label{39}
\begin{aligned}
&\sum_{i=j}^{n-k}\binom{n+i-1}{i}\binom{i}{j}=\frac{n}{n+j}\binom{2n-k}{n}\binom{n-k}{j}, \\
&\binom{n-1}{k-1}\binom{2n-k}{n}\frac{n}{n+j}\binom{n-k}{j}\binom{n-k+j}{j}^{-1}=\binom{n+j-1}{n+j-k}\binom{2n-k}{n-k-j}.
\end{aligned}
\end{equation}
The second identity is straightforward. The first identity follows by using $\sum_{i=j}^{m}\binom{i}{j}=\binom{m+1}{j+1}$ and noting that
\begin{equation*}
\sum_{i=j}^{n-k}\frac{n+j}{n}\binom{n+i-1}{i}\binom{i}{j}=\binom{n+j}{j}\sum_{i=n+j-1}^{2n-k-1}\binom{i}{n+j-1}.
\end{equation*}
\begin{theorem}
For any integers $n \ge k \ge 0$, we have
\begin{equation*}
S_{1,\lambda}^{Y}(n,k)=\sum_{j=0}^{n-k}\binom{n+j-1}{n+j-k}\binom{2n-k}{n-k-j}(-1)^{j}E[Y]^{-n-j}S_{2,\lambda}^{Y}(n-k+j,j).
\end{equation*}
\end{theorem}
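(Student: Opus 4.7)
The plan is to chain together the two preceding results, Theorem 2.2 (i.e. equation \eqref{25}) and Theorem 3.5, and then simplify the resulting double sum using the two binomial identities collected in \eqref{39}. Concretely, I would begin from the closed form
\begin{equation*}
S_{1,\lambda}^{Y}(n,k)=\binom{n-1}{k-1}\beta_{n-k,\lambda}^{(n,Y)},
\end{equation*}
which is the content of \eqref{25}, so the whole problem reduces to expressing $\beta_{n-k,\lambda}^{(n,Y)}$ in terms of $S_{2,\lambda}^{Y}$.

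The next step is to substitute $\gamma\mapsto n$ and $n\mapsto n-k$ in the identity of Theorem 3.5, using fresh summation indices $i$ (outer) and $j$ (inner). This yields
\begin{equation*}
\beta_{n-k,\lambda}^{(n,Y)}=\sum_{i=0}^{n-k}\sum_{j=0}^{i}\binom{n+i-1}{i}\binom{i}{j}\binom{n-k+j}{j}^{-1}(-1)^{j}E[Y]^{-n-j}S_{2,\lambda}^{Y}(n-k+j,j).
\end{equation*}
I would then interchange the order of summation so that $j$ becomes the outer index ($0\le j\le n-k$) and $i$ ranges from $j$ to $n-k$. Once this swap is done, the only factors depending on $i$ are $\binom{n+i-1}{i}\binom{i}{j}$, which is precisely the sum handled by the first identity in \eqref{39}, producing the factor $\frac{n}{n+j}\binom{2n-k}{n}\binom{n-k}{j}$.

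After collecting the result and multiplying by $\binom{n-1}{k-1}$, the coefficient of $(-1)^{j}E[Y]^{-n-j}S_{2,\lambda}^{Y}(n-k+j,j)$ becomes
\begin{equation*}
\binom{n-1}{k-1}\binom{2n-k}{n}\frac{n}{n+j}\binom{n-k}{j}\binom{n-k+j}{j}^{-1},
\end{equation*}
which by the second identity of \eqref{39} collapses to $\binom{n+j-1}{n+j-k}\binom{2n-k}{n-k-j}$. This is exactly the coefficient appearing in the statement, completing the derivation.

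The routine-looking but genuinely delicate step will be the index swap plus the bookkeeping that sends the double sum through the first identity in \eqref{39}; once that collapse is performed cleanly, the second identity is purely algebraic and essentially automatic. Since the authors explicitly tell us the proof parallels that of Theorem 3.1 in [26] and supply the two combinatorial identities in \eqref{39} precisely for this purpose, no new ingredient beyond these two identities and Theorem 3.5 is needed.
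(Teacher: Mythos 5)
Your proposal is correct and follows essentially the same route as the paper's own proof: specialize Theorem 3.5 to $\gamma=n$ with $n$ replaced by $n-k$, interchange the order of summation, collapse the inner sum with the first identity in \eqref{39}, and then combine with $S_{1,\lambda}^{Y}(n,k)=\binom{n-1}{k-1}\beta_{n-k,\lambda}^{(n,Y)}$ and the second identity in \eqref{39}. No discrepancies to report.
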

\begin{proof}
From Theorem 3.5, we see that 
\begin{equation} \label{40}
\begin{aligned}
\beta_{n-k,\lambda}^{(n,Y)}&=\sum_{i=0}^{n-k}\sum_{j=0}^{i}\binom{n+i-1}{i}\binom{i}{j}\binom{n-k+j}{j}^{-1}(-1)^{j}E[Y]^{-n-j}S_{2,\lambda}^{Y}(n-k+j,j) \\
&=\sum_{j=0}^{n-k}\binom{n-k+j}{j}^{-1}(-1)^{j}E[Y]^{-n-j}S_{2,\lambda}^{Y}(n-k+j,j)\sum_{i=j}^{n-k}\binom{n+i-1}{i}\binom{i}{j}.
\end{aligned}
\end{equation}
From Theorem 2.2, \eqref{39} and \eqref{40}, we note
\begin{align*}
S_{1,\lambda}^{Y}(n,k)&=\sum_{j=0}^{n-k}\binom{n-1}{k-1}\binom{2n-k}{n}\frac{n}{n+j}\binom{n-k}{j}\binom{n-k+j}{j}^{-1}\\
&\quad\quad\quad \times (-1)^{j}E[Y]^{-n-j}S_{2,\lambda}^{Y}(n-k+j,j) \\
&=\sum_{j=0}^{n-k} \binom{n+j-1}{n+j-k}\binom{2n-k}{n-k-j}(-1)^{j}E[Y]^{-n-j}S_{2,\lambda}^{Y}(n-k+j,j).
\end{align*}
\end{proof}
From Theorems 3.1 and 3.2, we recall that 
\begin{equation}  \label{41}
\begin{aligned}
&H_{\lambda}^{Y}(n,k)=(-1)^{n}S_{2,\lambda}^{-Y}(n,k)=S_{2,-\lambda}^{Y}(n,k),\\ &G_{\lambda}^{Y}(n,k)=(-1)^{k}S_{1,\lambda}^{-Y}(n,k)=S_{1,-\lambda}^{Y}(n,k).
\end{aligned}
\end{equation}
By Theorem 3.6 and \eqref{41}, we obtain the Schl\"omilch formula for the probabilisitc heterogeneous Stirling numbers of both kinds.
\begin{corollary}
For any integers $n \ge k \ge 0$, we have
\begin{equation*}
G_{\lambda}^{Y}(n,k)=\sum_{j=0}^{n-k}\binom{n+j-1}{n+j-k}\binom{2n-k}{n-k-j}(-1)^{j}E[Y]^{-n-j}H_{\lambda}^{Y}(n-k+j,j).
\end{equation*}
\end{corollary}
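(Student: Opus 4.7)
The plan is to derive this corollary essentially as a direct substitution from Theorem 3.6, using the symmetry recorded in \eqref{41}. Concretely, Theorem 3.6 asserts, for any nonzero real parameter $\mu$, that
\begin{equation*}
S_{1,\mu}^{Y}(n,k)=\sum_{j=0}^{n-k}\binom{n+j-1}{n+j-k}\binom{2n-k}{n-k-j}(-1)^{j}E[Y]^{-n-j}S_{2,\mu}^{Y}(n-k+j,j),
\end{equation*}
since the argument producing Theorem 3.6 makes no assumption on the sign of the degeneracy parameter. Thus I would first observe that specializing $\mu=-\lambda$ is legitimate.

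Next, I would apply \eqref{41}, which states $G_{\lambda}^{Y}(n,k)=S_{1,-\lambda}^{Y}(n,k)$ and $H_{\lambda}^{Y}(n,k)=S_{2,-\lambda}^{Y}(n,k)$. Substituting these two identities on both sides of the specialized Theorem 3.6 immediately converts the left-hand side into $G_{\lambda}^{Y}(n,k)$ and the summand factor $S_{2,-\lambda}^{Y}(n-k+j,j)$ into $H_{\lambda}^{Y}(n-k+j,j)$. The binomial coefficients and the factor $(-1)^{j}E[Y]^{-n-j}$ are independent of $\lambda$ and therefore carry over unchanged, producing the stated identity.

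There is no serious obstacle here, since the two ingredients are already in hand and the combinatorial identities in \eqref{39} have already been invoked inside the proof of Theorem 3.6. The only point worth flagging is the need to justify that Theorem 3.6 really is symmetric in the sign of $\lambda$; this is immediate because nowhere in its derivation is the positivity of $\lambda$ used (Theorems 2.2 and 3.5, as well as \eqref{39}, are all valid for arbitrary nonzero real $\lambda$). Thus the proof reduces to a one-line substitution, and I would present it as such.
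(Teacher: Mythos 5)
Your proposal is correct and matches the paper's own argument: the corollary is obtained by combining Theorem 3.6 with the identities $G_{\lambda}^{Y}(n,k)=S_{1,-\lambda}^{Y}(n,k)$ and $H_{\lambda}^{Y}(n,k)=S_{2,-\lambda}^{Y}(n,k)$ from \eqref{41}, exactly as you do. Your remark that Theorem 3.6 is valid for any nonzero degeneracy parameter (so that $\lambda\mapsto-\lambda$ is legitimate) is the right, and only, point needing justification.
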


\section{Probabilistic degenerate logarithm}

The Stirling numbers of the second kind and those of the first kind are respectively given by
\begin{equation*}
\frac{1}{k!}(e^{t}-1)^{k}=\sum_{n=k}^{\infty}S_{2}(n,k)\frac{t^{n}}{n!}, \quad
\frac{1}{k!}\big(\log(1+t)\big)^{k}=\sum_{n=k}^{\infty}S_{1}(n,k)\frac{t^{n}}{n!},\quad (k \ge 0).
\end{equation*}
The degenerate Stirling numbers of the second kind and those of the first kind are respectively defined by
\begin{equation*}
\frac{1}{k!}(e_{\lambda}(t)-1)^{k}=\sum_{n=k}^{\infty}S_{2,\lambda}(n,k)\frac{t^{n}}{n!}, \,\,
\frac{1}{k!}\big(\log_{\lambda}(1+t)\big)^{k}=\sum_{n=k}^{\infty}S_{1,\lambda}(n,k)\frac{t^{n}}{n!},\,\, (k \ge 0).
\end{equation*}
The probabilistic Stirling numbers of the second kind are specified as
\begin{equation*}
\frac{1}{k!}(E[e^{Yt}]-1)^{k}=\sum_{n=k}^{\infty}S_{2}^{Y}(n,k)\frac{t^{n}}{n!},\quad (k \ge 0).
\end{equation*}
The probabilistic degenerate Stirling numbers of the second kind are introduced as
\begin{equation*}
\frac{1}{k!}(E[e_{\lambda}^{Y}(t)]-1)^{k}=\sum_{n=k}^{\infty}S_{2,\lambda}^{Y}(n,k)\frac{t^{n}}{n!},\quad (k \ge 0).
\end{equation*}
Note that $e^{t}-1$ and $\log(1+t)$ are compositional inverses, as are $e_{\lambda}(t)-1$ and $\log_{\lambda}(1+t)$.
Furthermore, we observe that
\begin{equation*}
\log(1+t)=\sum_{n=1}^{\infty}S_{1}(n,1)\frac{t^{n}}{n!},\quad
\log_{\lambda}(1+t)=\sum_{n=1}^{\infty}S_{1,\lambda}(n,1)\frac{t^{n}}{n!}.
\end{equation*}
Let  
\begin{equation*}
e_{Y}(t)=E[e^{Y t}]-1, \quad e_{Y, \lambda}(t)=E[e_{\lambda}^{Y}(t)]-1. 
\end{equation*}
Then the probabilistic Stirling numbers of the first kind and the probabilisitc degenerate Stirling numbers of the first kind are respectively defined by
\begin{equation*}
\frac{1}{k!}\big(\bar{e}_{Y}(t)\big)^{k}=\sum_{n=k}^{\infty}S_{1}^{Y}(n,k)\frac{t^{n}}{n!},\quad \frac{1}{k!}\big(\bar{e}_{Y,\lambda}(t)\big)^{k}=\sum_{n=k}^{\infty}S_{1,\lambda}^{Y}(n,k)\frac{t^{n}}{n!},\quad (k \ge 0),
\end{equation*}
where $\bar{e}_{Y}(t)$ and $\bar{e}_{Y,\lambda}(t)$ are respectively the compositional inverses of $e_{Y}(t)$ and $e_{Y,\lambda}(t)$. \par
Taking into account these considerations, it is natural to define the {\it{probabilistic logarithm associated with $Y$}}, $\log^{Y}(1+t)$, and the {\it{probabilisitc degenerate logarithm associated with $Y$}}, $\log_{\lambda}^{Y}(1+t)$, respectively as
\begin{equation}\label{42}
\begin{aligned}
&\log^{Y}(1+t)=\bar{e}_{Y}(t)=\sum_{n=1}^{\infty}S_{1}^{Y}(n,1)\frac{t^{n}}{n!}=\sum_{n=1}^{\infty}B_{n-1}^{(n,Y)}\frac{t^{n}}{n!}, \\
&\log_{\lambda}^{Y}(1+t)=\bar{e}_{Y,\lambda}(t)=\sum_{n=1}^{\infty}S_{1,\lambda}^{Y}(n,1)\frac{t^{n}}{n!}=\sum_{n=1}^{\infty}\beta_{n-1,\lambda}^{(n,Y)}\frac{t^{n}}{n!},
\end{aligned} 
\end{equation}
where $B_{n}^{(\gamma,Y)}$ and $\beta_{n,\lambda}^{(\gamma,Y)}$ are respectively as in \eqref{27} and \eqref{22}.
Now, we have
\begin{equation*}
\frac{1}{k!}\big(\log_{\lambda}^{Y}(1+t)\big)^{k}=\sum_{n=k}^{\infty}S_{1,\lambda}^{Y}(n,k)\frac{t^{n}}{n!},\quad (k \ge 0),
\end{equation*}
and
\begin{equation*}
\frac{1}{k!}\big(\log^{Y}(1+t)\big)^{k}=\sum_{n=k}^{\infty}S_{1}^{Y}(n,k)\frac{t^{n}}{n!},\quad (k \ge 0).
\end{equation*}
Recalling that $S_{1}(n,1)=(-1)^{n-1}(n-1)!$ and $S_{1,\lambda}(n,1)=\frac{1}{\lambda}(\lambda)_{n}=(\lambda-1)_{n-1}$, we have
\begin{align*}
&\sum_{n=1}^{\infty}S_{1}(n,1)\frac{t^{n}}{n!}=\sum_{n=1}^{\infty}(-1)^{n-1}(n-1)!\frac{t^{n}}{n!}=\log(1+t), \\
&\sum_{n=1}^{\infty}S_{1,\lambda}(n,1)\frac{t^{n}}{n!}=\frac{1}{\lambda}\sum_{n=1}^{\infty}(\lambda)_{n}\frac{t^{n}}{n!}=\frac{1}{\lambda}\big((1+t)^{\lambda}-1\big)=\log_{\lambda}(1+t),
\end{align*}
as they should be. \par

The probabilistic Stirling numbers of the first kind associated with $Y$, and more generally the probabilistic degenerate Stirling numbers of the first kind associated with $Y$, were previously investigated in [7, 26] and [7], respectively. However, these numbers had not yet been formally integrated into the definitions of the probabilistic logarithm associated with $Y$ and the probabilistic degenerate logarithm associated with $Y$. In this paper, we provide more explicit and tractable definitions for these concepts. Specifically, in Section 5, we derive explicit expressions for the probabilistic degenerate logarithms associated with $Y$ for various discrete and continuous random variables. Furthermore, by applying the probabilistic degenerate version of the Schlömilch formula (see Theorem 3.6) and the definition in \eqref{42}, we show that $\log_{\lambda}^{Y}(1+t)$ can be represented in terms of the probabilistic degenerate Stirling numbers of the second kind associated with $Y$.

\begin{theorem}
The probabilistic degenerate logarithm $\log_{\lambda}^{Y}(1+t)$ is expressed in terms of probabilistic degenerate Stirling numbers of the second kind associated with $Y$ as in the following:
\begin{equation*}
\log_{\lambda}^{Y}(1+t)=\sum_{n=1}^{\infty}\sum_{j=0}^{n-1}\binom{2n-1}{n-1-j}(-1)^{j}E[Y]^{-n-j}S_{2,\lambda}^{Y}(n-1+j,j)\frac{t^{n}}{n!}.
\end{equation*}
\end{theorem}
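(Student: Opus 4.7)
The plan is to combine the definition of the probabilistic degenerate logarithm in \eqref{42} with the Schl\"omilch-type identity from Theorem 3.6, specialized to $k=1$. Concretely, from the second line of \eqref{42} we have
\begin{equation*}
\log_{\lambda}^{Y}(1+t)=\sum_{n=1}^{\infty}S_{1,\lambda}^{Y}(n,1)\frac{t^{n}}{n!},
\end{equation*}
so the task reduces to evaluating $S_{1,\lambda}^{Y}(n,1)$ in terms of the probabilistic degenerate Stirling numbers of the second kind.

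The next step is to specialize Theorem 3.6 at $k=1$. The theorem states
\begin{equation*}
S_{1,\lambda}^{Y}(n,k)=\sum_{j=0}^{n-k}\binom{n+j-1}{n+j-k}\binom{2n-k}{n-k-j}(-1)^{j}E[Y]^{-n-j}S_{2,\lambda}^{Y}(n-k+j,j),
\end{equation*}
and with $k=1$ the first binomial coefficient collapses to $\binom{n+j-1}{n+j-1}=1$, while the second becomes $\binom{2n-1}{n-1-j}$. Hence
\begin{equation*}
S_{1,\lambda}^{Y}(n,1)=\sum_{j=0}^{n-1}\binom{2n-1}{n-1-j}(-1)^{j}E[Y]^{-n-j}S_{2,\lambda}^{Y}(n-1+j,j).
\end{equation*}

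Substituting this expression into the series for $\log_{\lambda}^{Y}(1+t)$ and swapping the sums (they are finite in $j$ for each $n$, so no convergence issue arises at the level of formal power series) yields the desired identity. There is essentially no obstacle here: the proof is a one-line specialization of Theorem 3.6 followed by the defining series in \eqref{42}, and the only things to verify carefully are the two binomial simplifications at $k=1$ and the fact that the outer sum starts at $n=1$ (matching the fact that $\log_{\lambda}^{Y}(1+t)$ is a delta series, so the constant term vanishes).
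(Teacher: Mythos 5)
Your proposal is correct and matches the paper's intended argument exactly: the paper derives this theorem by combining the defining series $\log_{\lambda}^{Y}(1+t)=\sum_{n=1}^{\infty}S_{1,\lambda}^{Y}(n,1)\frac{t^{n}}{n!}$ from \eqref{42} with Theorem 3.6 specialized at $k=1$, which is precisely what you do. The binomial simplifications $\binom{n+j-1}{n+j-1}=1$ and $\binom{2n-1}{n-1-j}$ are verified correctly.
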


The Daehee numbers of order $\gamma$ are given by $\big(\frac{\log(1+t)}{t}\big)^{\gamma}=\sum_{n=0}^{\infty}D_{n}^{(\gamma)}\frac{t^{n}}{n!}$, and the degenerate Daehee numbers of order $\gamma$ by $\big(\frac{\log_{\lambda}(1+t)}{t}\big)^{\gamma}=\sum_{n=0}^{\infty}D_{n,\lambda}^{(\gamma)}\frac{t^{n}}{n!}$. We define the {\it{probabilistic degenerate Daehee numbers of order $\gamma$ associated with $Y$}} by
\begin{equation} \label{43}
\Big(\frac{\log_{\lambda}^{Y}(1+t)}{t}\Big)^{\gamma}=\sum_{n=0}^{\infty}D_{n,\lambda}^{(\gamma,Y)}\frac{t^{n}}{n!},
\end{equation}
and the {\it{probabilistic Daehee numbers of order $\gamma$ associated with $Y$}} by
\begin{equation} \label{44}
\Big(\frac{\log^{Y}(1+t)}{t}\Big)^{\gamma}=\sum_{n=0}^{\infty}D_{n}^{(\gamma,Y)}\frac{t^{n}}{n!}.
\end{equation}
We observe that $D_{n,\lambda}^{(\gamma,Y)}$ becomes $D_{n,\lambda}^{(\gamma)}$ when $Y=1$. Furthermore, $D_{n,\lambda}^{(\gamma,Y)} \rightarrow D_{n}^{(\gamma, Y)}$ and $D_{n,\lambda}^{(\gamma)} \rightarrow D_{n}^{(\gamma)}$, as $\lambda \rightarrow 0$. \par
The Cauchy numbers of order $\gamma$ are given by $\big(\frac{t}{\log(1+t)}\big)^{\gamma}=\sum_{n=0}^{\infty}C_{n}^{(\gamma)}\frac{t^{n}}{n!}$, and the degenerate Cauchy numbers of order $\gamma$ by $\big(\frac{t}{\log_{\lambda}(1+t)}\big)^{\gamma}=\sum_{n=0}^{\infty}C_{n,\lambda}^{(\gamma)}\frac{t^{n}}{n!}$. We define the {\it{probabilistic degenerate Cauchy numbers of order $\gamma$ associated with $Y$}} by
\begin{equation} \label{45}
\Big(\frac{t}{\log_{\lambda}^{Y}(1+t)}\Big)^{\gamma}=\sum_{n=0}^{\infty}C_{n,\lambda}^{(\gamma,Y)}\frac{t^{n}}{n!},
\end{equation}
and the {\it{probabilistic Cauchy numbers of order $\gamma$ associated with $Y$}} by
\begin{equation} \label{46}
\Big(\frac{t}{\log^{Y}(1+t)}\Big)^{\gamma}=\sum_{n=0}^{\infty}C_{n}^{(\gamma,Y)}\frac{t^{n}}{n!}.
\end{equation}
We note that $C_{n,\lambda}^{(\gamma,Y)}$ becomes $C_{n,\lambda}^{(\gamma)}$ when $Y=1$. Furthermore, $C_{n,\lambda}^{(\gamma,Y)} \rightarrow C_{n}^{(\gamma,Y)}$ and $C_{n,\lambda}^{(\gamma)} \rightarrow C_{n}^{(\gamma)}$, as $\lambda \rightarrow 0$.
The next four theorems show how the probabilistic degenerate Daehee and Cauchy numbers can be expressed in terms of the probabilisitc degenerate Bernoulli numbers.\par
Replacing $t$ by $E[e_{\lambda}^{Y}(t)]-1$ in \eqref{43}, we get
\begin{equation}
\begin{aligned} \label{47}
\sum_{n=0}^{\infty}\beta_{n,\lambda}^{(\gamma,Y)}\frac{t^{n}}{n!}&=\Big(\frac{t}{E[e_{\lambda}^{Y}(t)]-1}\Big)^{\gamma}=\sum_{k=0}^{\infty}D_{k,\lambda}^{(\gamma,Y)}\frac{1}{k!}\big(E[e_{\lambda}^{Y}(t)]-1\big)^{k} \\
&=\sum_{k=0}^{\infty}D_{k,\lambda}^{(\gamma,Y)}\sum_{n=k}^{\infty}S_{2,\lambda}^{Y}(n,k)\frac{t^{n}}{n!}=\sum_{n=0}^{\infty}\sum_{k=0}^{n}D_{k,\lambda}^{(\gamma,Y)}S_{2,\lambda}^{Y}(n,k) \frac{t^{n}}{n!}.
\end{aligned}
\end{equation}
The next theorem follows from \eqref{47} and by inversion (see Proposition 2.1).
\begin{theorem}
For any integer $n \ge 0$, we have
\begin{equation*}
D_{n,\lambda}^{(\gamma,Y)}=\sum_{k=0}^{n}\beta_{k,\lambda}^{(\gamma,Y)}S_{1,\lambda}^{Y}(n,k).
\end{equation*}
\end{theorem}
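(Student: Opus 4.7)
The plan is to read off a Stirling-type identity from equation \eqref{47} and then invert it via Proposition 2.1(b). Comparing coefficients of $t^n/n!$ in the last line of \eqref{47} gives the sequence-level identity
$$\beta_{n,\lambda}^{(\gamma,Y)}=\sum_{k=0}^{n}S_{2,\lambda}^{Y}(n,k)\,D_{k,\lambda}^{(\gamma,Y)},\qquad n\ge 0,$$
which fits the template $a_n=\sum_{k=0}^{n}S_{2,\lambda}^{Y}(n,k)b_k$ in Proposition 2.1(b) with $a_n=\beta_{n,\lambda}^{(\gamma,Y)}$ and $b_k=D_{k,\lambda}^{(\gamma,Y)}$. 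Applying the forward direction of that equivalence immediately produces the dual expansion $D_{n,\lambda}^{(\gamma,Y)}=\sum_{k=0}^{n}S_{1,\lambda}^{Y}(n,k)\,\beta_{k,\lambda}^{(\gamma,Y)}$, which is the asserted identity.

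For readers who prefer a direct generating-function route, I would alternatively substitute $t\mapsto\bar{e}_{Y,\lambda}(t)=\log_{\lambda}^{Y}(1+t)$ in the defining series \eqref{22}. The compositional-inverse property $e_{Y,\lambda}(\bar{e}_{Y,\lambda}(t))=t$ collapses the left side to $\bigl(\log_{\lambda}^{Y}(1+t)/t\bigr)^{\gamma}=\sum_{n\ge 0}D_{n,\lambda}^{(\gamma,Y)}t^n/n!$ by \eqref{43}, while on the right one expands $(\bar{e}_{Y,\lambda}(t))^k/k!=\sum_{n\ge k}S_{1,\lambda}^{Y}(n,k)t^n/n!$ using \eqref{25}. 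Matching the coefficient of $t^n/n!$ on both sides again yields the claim, now without explicitly invoking the orthogonality.

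I do not anticipate any real obstacle here: the theorem is a formal inversion of an identity already derived in the paper, and Proposition 2.1 supplies the inversion machinery as a black box. The only housekeeping point is to note that $S_{1,\lambda}^{Y}(n,k)$ vanishes for $k>n$, so the upper summation limit $k=n$ in the final expression is unambiguous, and that the substitution argument in the alternative proof is legitimate because $\bar{e}_{Y,\lambda}(t)$ is a delta series (its constant term vanishes), so composition of formal power series is well defined.
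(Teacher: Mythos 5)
Your first argument is exactly the paper's proof: read the identity $\beta_{n,\lambda}^{(\gamma,Y)}=\sum_{k=0}^{n}S_{2,\lambda}^{Y}(n,k)D_{k,\lambda}^{(\gamma,Y)}$ off from \eqref{47} and invert it via Proposition 2.1(b). The proposal is correct, and the alternative substitution route you sketch is a valid but unnecessary bonus.
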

In the same manner, one shows the following.
\begin{theorem}
For any integer $n \ge 0$, we have
\begin{equation*}
C_{n,\lambda}^{(\gamma,Y)}=\sum_{k=0}^{n}\beta_{k,\lambda}^{(-\gamma,Y)}S_{1,\lambda}^{Y}(n,k).
\end{equation*}
\end{theorem}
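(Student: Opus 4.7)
The plan is to mirror the derivation of Theorem 4.2 in equation \eqref{47}, replacing $t$ by $e_{Y,\lambda}(t)=E[e_\lambda^Y(t)]-1$ in the generating function \eqref{45} that defines $C_{n,\lambda}^{(\gamma,Y)}$. Since $\log_\lambda^Y(1+t)=\bar e_{Y,\lambda}(t)$ is by definition \eqref{42} the compositional inverse of $e_{Y,\lambda}(t)$, we have $\log_\lambda^Y\!\bigl(1+e_{Y,\lambda}(t)\bigr)=t$, so the left-hand side of \eqref{45} under this substitution collapses to
\[
\left(\frac{e_{Y,\lambda}(t)}{\log_\lambda^Y\!\bigl(1+e_{Y,\lambda}(t)\bigr)}\right)^\gamma=\left(\frac{e_{Y,\lambda}(t)}{t}\right)^\gamma=\left(\frac{t}{E[e_\lambda^Y(t)]-1}\right)^{-\gamma}.
\]
By the definition \eqref{22} of the probabilistic degenerate Bernoulli numbers of order $-\gamma$ (evaluated at $x=0$), this last expression equals $\sum_{n=0}^\infty \beta_{n,\lambda}^{(-\gamma,Y)}\tfrac{t^n}{n!}$. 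This is the one observation that distinguishes the present case from Theorem 4.2: because $C$ involves the reciprocal of $\log_\lambda^Y$, the inversion swaps $\gamma$ for $-\gamma$.

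Next I would expand the right-hand side under the same substitution. Using \eqref{45} with $t\mapsto e_{Y,\lambda}(t)$ and then invoking the generating function \eqref{23} for $S_{2,\lambda}^Y(n,k)$, I obtain
\[
\sum_{k=0}^\infty C_{k,\lambda}^{(\gamma,Y)}\frac{\bigl(E[e_\lambda^Y(t)]-1\bigr)^k}{k!}=\sum_{k=0}^\infty C_{k,\lambda}^{(\gamma,Y)}\sum_{n=k}^\infty S_{2,\lambda}^Y(n,k)\frac{t^n}{n!}=\sum_{n=0}^\infty\Biggl(\sum_{k=0}^n C_{k,\lambda}^{(\gamma,Y)}S_{2,\lambda}^Y(n,k)\Biggr)\frac{t^n}{n!}.
\]
Equating coefficients of $t^n/n!$ on the two sides yields $\beta_{n,\lambda}^{(-\gamma,Y)}=\sum_{k=0}^n C_{k,\lambda}^{(\gamma,Y)}S_{2,\lambda}^Y(n,k)$ for every $n\ge 0$.

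To finish, I would apply the inversion relation of Proposition 1.1(b) with $a_n=\beta_{n,\lambda}^{(-\gamma,Y)}$ and $b_n=C_{n,\lambda}^{(\gamma,Y)}$, which immediately gives $C_{n,\lambda}^{(\gamma,Y)}=\sum_{k=0}^n \beta_{k,\lambda}^{(-\gamma,Y)}S_{1,\lambda}^Y(n,k)$, as claimed. There is no real obstacle here; the proof is a direct analogue of Theorem 4.2, and the only point requiring care is tracking the sign of the order $\gamma$ when rewriting $(e_{Y,\lambda}(t)/t)^\gamma$ as $(t/e_{Y,\lambda}(t))^{-\gamma}$ so that the appropriate generating function \eqref{22} is recognized.
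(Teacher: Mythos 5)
Your proposal is correct and is precisely the argument the paper intends: the paper proves the Daehee case (Theorem 4.2) via the substitution $t\mapsto E[e_\lambda^Y(t)]-1$ in \eqref{47} and then states that the Cauchy case follows ``in the same manner,'' which is exactly your derivation, including the key observation that $(e_{Y,\lambda}(t)/t)^\gamma=(t/(E[e_\lambda^Y(t)]-1))^{-\gamma}$ produces $\beta_{n,\lambda}^{(-\gamma,Y)}$ before applying the inversion relation.
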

For the next result, we apply the formula (A) with 
\begin{align*}
&g(t)=\bigg(\frac{t}{E[e_{\lambda}^{Y}(t)]-1}\bigg)^{\gamma}=\Big(\frac{t}{e_{Y,\lambda}(t)}\Big)^{\gamma}, \\
&f(t)=e_{Y,\lambda}(t)=E[e_{\lambda}^{Y}(t)]-1.
\end{align*}
Then we note that the left side of the formula (A) is equal to
\begin{equation} \label{48}
[t^{n}]\Big(\frac{\bar{e}_{Y,\lambda}(t)}{t}\Big)^{\gamma}=D_{n,\lambda}^{(\gamma,Y)}\frac{1}{n!},
\end{equation}
while the right hand side of that is equal to
\begin{equation} \label{49}
\begin{aligned}
&\frac{1}{n}[t^{n-1}]\bigg(\bigg(\frac{t}{e_{Y,\lambda}(t)}\bigg)^{\gamma}\bigg)^{\prime}\bigg(\frac{t}{e_{Y,\lambda}(t)}\bigg)^{n} \\
&\quad\quad =\frac{\gamma}{n(n+\gamma)}[t^{n-1}]\bigg(\bigg(\frac{t}{E[e_{\lambda}^{Y}(t)]-1}\bigg)^{n+\gamma}\bigg)^{\prime} \\
&\quad\quad=\frac{\gamma}{n(n+\gamma)}\beta_{n,\lambda}^{(n+\gamma,Y)}\frac{1}{(n-1)!}=\frac{\gamma}{n+\gamma}\frac{1}{n!}\beta_{n,\lambda}^{(n+\gamma,Y)}.
\end{aligned}
\end{equation}
The next result follows from \eqref{48} and \eqref{49}.
\begin{theorem}
For any integer $n \ge 0$ with $\gamma \ne - n$, we have
\begin{equation*}
D_{n,\lambda}^{(\gamma,Y)}=\frac{\gamma}{\gamma+n}\beta_{n,\lambda}^{(n+\gamma,Y)}.
\end{equation*}
\end{theorem}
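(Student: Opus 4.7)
The plan is to apply the Lagrange inversion formula (A) from the preliminaries with the delta series $f(t)=e_{Y,\lambda}(t)=E[e_{\lambda}^{Y}(t)]-1$ and the outer function $g(t)=\bigl(t/e_{Y,\lambda}(t)\bigr)^{\gamma}$. The motivation for this choice is that $\bar{f}(t)=\log_{\lambda}^{Y}(1+t)$ by the definition \eqref{42}, so that
\[
g(\bar{f}(t))=\bigg(\frac{\bar{f}(t)}{e_{Y,\lambda}(\bar{f}(t))}\bigg)^{\gamma}=\bigg(\frac{\log_{\lambda}^{Y}(1+t)}{t}\bigg)^{\gamma},
\]
and the left-hand side of (A) immediately reads $[t^{n}]\,g(\bar{f}(t))=D_{n,\lambda}^{(\gamma,Y)}/n!$ by the defining relation \eqref{43}. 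This is precisely the identification recorded in \eqref{48}.

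For the right-hand side of (A), I would introduce the shorthand $h(t)=t/e_{Y,\lambda}(t)$, so that $g(t)=h(t)^{\gamma}$. The key algebraic step is the logarithmic derivative trick
\[
g'(t)\,h(t)^{n}=\gamma\,h(t)^{n+\gamma-1}h'(t)=\frac{\gamma}{n+\gamma}\frac{d}{dt}\bigl(h(t)^{n+\gamma}\bigr),
\]
which is where the hypothesis $\gamma\ne -n$ is used. By the definition \eqref{22} of the probabilistic degenerate Bernoulli numbers of order $n+\gamma$ at $x=0$, we have $h(t)^{n+\gamma}=\sum_{m\ge 0}\beta_{m,\lambda}^{(n+\gamma,Y)}t^{m}/m!$. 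Differentiating term-by-term and extracting the coefficient of $t^{n-1}$ yields $\beta_{n,\lambda}^{(n+\gamma,Y)}/(n-1)!$, so the right-hand side of (A) simplifies to $\frac{\gamma}{n(n+\gamma)}\beta_{n,\lambda}^{(n+\gamma,Y)}/(n-1)!=\frac{\gamma}{n+\gamma}\beta_{n,\lambda}^{(n+\gamma,Y)}/n!$, matching \eqref{49}.

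Equating the two expressions for $[t^{n}]\,g(\bar{f}(t))$ and clearing the common factor $1/n!$ produces the asserted identity. There is no real obstacle here: the whole argument is a direct transcription of the classical derivation of $D_{n}^{(\gamma)}=\frac{\gamma}{\gamma+n}B_{n}^{(n+\gamma)}$, with the probabilistic degenerate Bernoulli numbers of \eqref{22} playing the role of the classical Bernoulli numbers. The only place where one needs to be careful is the derivative-and-power manipulation displayed above, since it silently uses $\gamma+n\ne 0$; this is exactly the restriction included in the statement.
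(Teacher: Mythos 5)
Your proposal is correct and is essentially identical to the paper's own argument: the authors also apply the Lagrange inversion formula (A) with $f(t)=e_{Y,\lambda}(t)$ and $g(t)=\bigl(t/e_{Y,\lambda}(t)\bigr)^{\gamma}$, identify the left side as $D_{n,\lambda}^{(\gamma,Y)}/n!$ via \eqref{43}, and use the same derivative-of-a-power manipulation to reduce the right side to $\frac{\gamma}{n+\gamma}\beta_{n,\lambda}^{(n+\gamma,Y)}/n!$ (their \eqref{48} and \eqref{49}). No discrepancies to note.
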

In the same fashion, one shows the next result.
\begin{theorem}
For any integer $n \ge 0$ with $\gamma \ne n$, we have
\begin{equation*}
C_{n,\lambda}^{(\gamma,Y)}=\frac{\gamma}{\gamma-n}\beta_{n,\lambda}^{(n-\gamma,Y)}.
\end{equation*}
\end{theorem}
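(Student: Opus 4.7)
The plan is to deduce the result as an immediate consequence of Theorem 4.4 via a simple reflection of the order parameter. Comparing the defining generating functions \eqref{43} and \eqref{45},
\[
\left(\frac{t}{\log_\lambda^Y(1+t)}\right)^\gamma = \left(\frac{\log_\lambda^Y(1+t)}{t}\right)^{-\gamma},
\]
and reading off coefficients gives $C_{n,\lambda}^{(\gamma,Y)} = D_{n,\lambda}^{(-\gamma,Y)}$ for every $n \ge 0$. Applying Theorem 4.4 with $\gamma$ replaced by $-\gamma$ (which is legitimate exactly when $-\gamma \ne -n$, i.e.\ $\gamma \ne n$) then yields
\[
C_{n,\lambda}^{(\gamma,Y)} = D_{n,\lambda}^{(-\gamma,Y)} = \frac{-\gamma}{-\gamma + n}\, \beta_{n,\lambda}^{(n-\gamma,Y)} = \frac{\gamma}{\gamma - n}\, \beta_{n,\lambda}^{(n-\gamma,Y)},
\]
which is the assertion.

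If instead one wishes to mirror the structure of the proof of Theorem 4.4 directly, as the phrase ``in the same fashion'' in the paper suggests, one can apply the Lagrange inversion formula (A) with
\[
f(t) = e_{Y,\lambda}(t) = E[e_\lambda^Y(t)] - 1, \qquad g(t) = \left(\frac{E[e_\lambda^Y(t)] - 1}{t}\right)^\gamma = \left(\frac{t}{f(t)}\right)^{-\gamma}.
\]
Since $g(\bar{f}(t)) = (f(\bar f(t))/\bar f(t))^\gamma = (t/\bar{e}_{Y,\lambda}(t))^\gamma = (t/\log_\lambda^Y(1+t))^\gamma$, the left-hand side of (A) equals $C_{n,\lambda}^{(\gamma,Y)}/n!$ by \eqref{45}. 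For the right-hand side, I would use the identity
\[
g'(t)\left(\frac{t}{f(t)}\right)^n = \frac{-\gamma}{n-\gamma}\,\frac{d}{dt}\left(\frac{t}{f(t)}\right)^{n-\gamma},
\]
which is the exact analog of the manipulation leading to \eqref{49}, with $n+\gamma$ replaced by $n-\gamma$; this identity holds because both sides are scalar multiples of $(t/f(t))^{-\gamma-1}\,(t/f(t))'$. Then expanding $(t/f(t))^{n-\gamma} = \sum_{m \ge 0} \beta_{m,\lambda}^{(n-\gamma,Y)} t^m/m!$ via \eqref{22}, the coefficient $[t^{n-1}]$ of its derivative is $\beta_{n,\lambda}^{(n-\gamma,Y)}/(n-1)!$, and combining these gives the claimed formula after clearing the $n!$.

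There is no real obstacle here: the computation is entirely parallel to that of Theorem 4.4, with the two sign changes (one in the exponent of $g$, one in the order parameter of $\beta$) conspiring to produce the prefactor $\gamma/(\gamma - n)$ in place of $\gamma/(\gamma + n)$. The hypothesis $\gamma \ne n$ is used precisely to ensure that $n - \gamma$ does not vanish in the key identity above, exactly mirroring the role of $\gamma \ne -n$ in Theorem 4.4.
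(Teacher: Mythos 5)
Your proposal is correct, and your second argument is exactly the computation the paper intends by ``in the same fashion'': apply formula (A) with $g(t)=\left(\frac{t}{f(t)}\right)^{-\gamma}$, rewrite $g'(t)\left(\frac{t}{f(t)}\right)^{n}$ as $\frac{-\gamma}{n-\gamma}\frac{d}{dt}\left(\frac{t}{f(t)}\right)^{n-\gamma}$, and read off the coefficient via \eqref{22}. Your first argument, the observation that $C_{n,\lambda}^{(\gamma,Y)}=D_{n,\lambda}^{(-\gamma,Y)}$ so that the result follows from Theorem 4.4 by replacing $\gamma$ with $-\gamma$, is a clean shortcut that is equally valid.
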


\section{Examples}
In this section, we illustrate our results by providing explicit expressions for various discrete and continuous random variables $Y$, as detailed in items (a) through (i). Specifically, for each random variable, we evaluate the following quantities:
\begin{equation*}
E[Y], \quad E[e_{\lambda}^{Y}(t)], \quad S_{2,\lambda}^{Y}(n,k), \quad S_{1,\lambda}^{Y}(n,k), \quad \text{and} \quad \log_{\lambda}^{Y}(1+t).
\end{equation*}
While most of these results were previously established in [7], items (h) and (i) represent new contributions. Notably, the negative binomial random variable $Y \sim NB(r,p)$ in (h) was not addressed in [7]. Furthermore, for the uniform random variable $Y \sim U[0,1]$ in (i), we derive the expression for $S_{1,\lambda}^{Y}(n,k)$ using formulas (A) and (B), as it was absent from the prior study. By utilizing these items, we obtain the following results: \\
$\indent \bullet$ Explicit expressions for $\beta_{n,\lambda}^{(\gamma,Y)}$: Derived from Theorem 3.5 in conjunction with the values for $E[Y]$ and $S_{2,\lambda}^{Y}(n,k)$ provided in (a)–(i). \\
$\indent \bullet$ Schl\"omilch-type formulas: Established via Theorem 3.6 using the expressions for $E[Y]$, $S_{1,\lambda}^{Y}(n,k)$, and $S_{2,\lambda}^{Y}(n,k)$ found in (a)–(i).

\vspace{0.1in}
\noindent (a) Bernoulli random variable $(Y \sim Ber(p))$, with $ 0<p \le 1$: \par
\begin{align*}
&E[Y]= p, \quad   E[e_{\lambda}^{Y}(t)]= 1+p(e_{\lambda}(t)-1),  \\
&S_{2,\lambda}^{Y}(n,k)= p^{k}S_{2,\lambda}(n,k), \quad S_{1,\lambda}^{Y}(n,k)=\frac{1}{p^{n}}S_{1,\lambda}(n,k),\\
&\log_{\lambda}^{Y}(1+t)=\log_{\lambda}\big(1+\frac{t}{p}\big).
\end{align*}

(b) Binomial random variable $(Y \sim b(m,p))$, with $ 0<p \le 1$, $m$ a positive integer: \par
\begin{align*}
&E[Y]= mp, \quad  E[e_{\lambda}^{Y}(t)]=\big(1+p(e_{\lambda}(t)-1)\big)^{m},       \\
&S_{2,\lambda}^{Y}(n,k)=\sum_{j=k}^{n}\sum_{i=j}^{n}m^{j}p^{i}S_{2}(j,k)S_{1}(i,j)S_{2,\lambda}(n,i), \\
&S_{1,\lambda}^{Y}(n,k)= \sum_{j=k}^{n}\sum_{i=j}^{n}\frac{1}{p^{j}}\frac{1}{m^{i}}S_{2}(i,j)S_{1}(n,i)S_{1,\lambda}(j,k), \\
&\log_{\lambda}^{Y}(1+t)=\log_{\lambda}\big(1+\frac{1}{p}\big((1+t)^{\frac{1}{m}}-1\big)\big).
\end{align*}

(c) Poisson random variable $(Y \sim Poisson(\alpha))$, with $ \alpha > 0$: \par
\begin{align*}
&E[Y]= \alpha,  \quad   E[e_{\lambda}^{Y}(t)]=e^{\alpha\big(e_{\lambda}(t)-1\big)},       \\
&S_{2,\lambda}^{Y}(n,k)= \sum_{j=k}^{n}\alpha^{j}S_{2}(j,k)S_{2,\lambda}(n,j), \quad S_{1,\lambda}^{Y}(n,k)= \sum_{j=k}^{n}\frac{1}{\alpha^{j}}S_{1,\lambda}(j,k)S_{1}(n,j), \\
&\log_{\lambda}^{Y}(1+t)=\log_{\lambda}\big(1+\frac{1}{\alpha}\log(1+t)\big).
\end{align*}

(d) Exponential random variable $(Y \sim Exp(\alpha))$, with $\alpha > 0$: \par
\begin{align*}
&E[Y]=\frac{1}{\alpha},  \quad   E[e_{\lambda}^{Y}(t)]= \frac{\alpha}{\alpha-\log e_{\lambda}(t)}, \\
&S_{2,\lambda}^{Y}(n,k)= \sum_{j=k}^{n}\binom{j}{k}(j-1)_{j-k}\frac{1}{\alpha^{j}}\lambda^{n-j}S_{1}(n,j),\\       
&S_{1,\lambda}^{Y}(n,k)= \sum_{j=k}^{n}(-1)^{n-j}L(n,j)\alpha^{j}\lambda^{j-k}S_{2}(j,k), \\
&\log_{\lambda}^{Y}(1+t)=\log_{\lambda}\big(e^{\alpha \frac{t}{1+t}}\big).
\end{align*}

(e) Gamma random variable $(Y \sim \Gamma(\alpha, \beta))$, with $\alpha,\, \beta >0$: \par
\begin{align*}
&E[Y]=\frac{\alpha}{\beta},  \quad  E[e_{\lambda}^{Y}(t)]= \bigg(\frac{\beta}{\beta-\frac{1}{\lambda}\log(1+\lambda t)}\bigg)^{\alpha},  \\
&S_{2,\lambda}^{Y}(n,k)= \sum_{l=0}^{n}\sum_{j=0}^{k}(-1)^{k-j}\frac{1}{k!}\binom{k}{j}(\alpha j+l-1)_{l}\frac{1}{\beta^{l}}\lambda^{n-l}S_{1}(n,l), \\     
&S_{1,\lambda}^{Y}(n,k)= \sum_{l=k}^{\infty}\sum_{j=0}^{l}(-1)^{j}\frac{1}{l!}\binom{l}{j}\Big(\frac{-j}{\alpha}\Big)_{n}\beta^{l}\lambda^{l-k}S_{2}(l,k), \\
&\log_{\lambda}^{Y}(1+t)=\log_{\lambda}\Big(e^{\beta\big(1-(1+t)^{-\frac{1}{\alpha}}\big)}\Big).
\end{align*}

(f) Geometric random variable $(Y \sim G(p))$, with $0<p<1$: \par
\begin{align*}
&E[Y]= \frac{1}{p},  \quad   E[e_{\lambda}^{Y}(t)]= \frac{pe_{\lambda}(t)}{1-(1-p)e_{\lambda}(t)},    \\
&S_{2,\lambda}^{Y}(n,k)= \frac{1}{k!}\Big(\frac{1}{p-1}\Big)^{k}\sum_{j=0}^{k}\binom{k}{j}(-1)^{j}h_{n,\lambda}^{(j)}\Big(\frac{1}{1-p}\Big), \\  
&S_{1,\lambda}^{Y}(n,k)=\sum_{j=k}^{n}L(n,j)p^{j}(p-1)^{n-j}S_{1,\lambda}(j,k), \\
&\log_{\lambda}^{Y}(1+t)=\log_{\lambda}\Big(\frac{1+t}{1+(1-p)t}\Big),
\end{align*}
where $h_{n,\lambda}^{(r)}(u)$ are the degenerate Frobenius-Euler numbers of order $r$, given by
\begin{equation*}
\Big(\frac{1-u}{e_{\lambda}(t)-u} \Big)^{r}=\sum_{n=0}^{\infty} h_{n,\lambda}^{(r)}(u)\frac{t^{n}}{n!}, \quad (u \ne 1). 
\end{equation*}

(g) Normal random variable with parameters $(\mu, \sigma^{2})$ $(Y \sim N(\mu, \sigma^{2})$, with $\mu \ne 0, \sigma>0$: \par
\begin{align*}
&E[Y]=\mu,  \quad   E[e_{\lambda}^{Y}(t)]= e^{\mu\log e_{\lambda}(t)+\frac{1}{2}\sigma^{2}\big(\log e_{\lambda}(t)\big)^{2}}, \\
&S_{2,\lambda}^{Y}(n,k)=\sum_{m=k}^{n}\sum_{j=k}^{m}\frac{m!}{j!}\binom{j}{m-j}\mu^{2j-m}\Big(\frac{\sigma^{2}}{2}\Big)^{m-j}\lambda^{n-m}S_{2}(j,k)S_{1}(n,m),\\
&S_{1,\lambda}^{Y}(n,k)=\frac{1}{\lambda^{k}}\sum_{m=0}^{n}\sum_{j=k}^{\infty}\sum_{l=0}^{j}(-1)^{j+l}\frac{1}{j!}\Big(\frac{\lambda \mu}{\sigma^{2}} \Big )^{j}\Big(\frac{l}{2}\Big)_{m}2^{m}\Big(\frac{\sigma}{\mu} \Big)^{2m}\binom{j}{l}S_{2}(j,k) S_{1}(n,m),\\
&\log_{\lambda}^{Y}(1+t)
=\frac{1}{\lambda}\sum_{n=1}^{\infty}\sum_{m=0}^{n}\sum_{j=1}^{\infty}\sum_{l=1}^{j}(-1)^{j+l}\frac{1}{j!}\binom{j}{l}\Big(\frac{\lambda \mu}{\sigma^{2}}\Big)^{j}\Big(\frac{l}{2}\Big)_{m}2^{m}\Big(\frac{\sigma}{\mu}\Big)^{2m}S_{1}(n,m)\frac{t^{n}}{n!}.
\end{align*}
(h) Negative binomial random variable $(Y \sim NB(r,p))$, with $0<p<1$, $r$ a positive integer. This is not treated in [7]. The expression for $E[Y]$ is well known and those for $E[e_{\lambda}^{Y}(t)]$ and $\log_{\lambda}^{Y}(1+t)$ are  easy to see. We show here only the expression for $S_{1,\lambda}^{Y}(n,k)$, leaving the proof for $S_{2,\lambda}^{Y}(n,k)$ to the reader.\par
\begin{align*}
&E[Y]= \frac{r(1-p)}{p},  \quad   E[e_{\lambda}^{Y}(t)]= \bigg(\frac{p}{1-(1-p)e_{\lambda}(t)} \bigg)^{r},\\
&S_{2,\lambda}^{Y}(n,k)= \sum_{j=0}^{\infty}\sum_{m=0}^{j}\sum_{l=0}^{m}\frac{1}{(k-l)!}(p-1)^{j}(p^{r}-1)^{k-l}p^{rl}(-r)^{m}\frac{(j)_{n,\lambda}}{j!}S_{1}(j,m)S_{2}(m,l), \\
&S_{1,\lambda}^{Y}(n,k)= \sum_{l=0}^{k}\sum_{m=l}^{\infty}\frac{(-p)^{m}}{(k-l)!}\frac{\big(-\frac{m}{r}\big)_{n}}{m!}\Big(\frac{1}{1-p}\Big)^{\lambda l}\Big(\log_{\lambda}\Big(\frac{1}{1-p}\Big)\Big)^{k-l}S_{1,\lambda}(m,l),\\
&\log_{\lambda}^{Y}(1+t)=\log_{\lambda}\Big(\frac{1}{1-p}\big(1-p(1+t)^{-\frac{1}{r}} \big)\Big).
\end{align*}
 To derive the expression for $S_{1,\lambda}^{Y}(n,k)$, we first observe that 
\begin{equation*}
\log_{\lambda}(AB)=A^{\lambda}\log_{\lambda}(B)+\log_{\lambda}(A).
\end{equation*}
Here, for $e_{Y,\lambda}(t)=E[e_{\lambda}^{Y}(t)]-1=\big(\frac{p}{1-(1-p)e_{\lambda}(t)} \big)^{r}-1$, it is immediate to see that
\begin{align*}
\bar{e}_{Y,\lambda}(t)=\log_{\lambda}^{Y}(1+t)=\log_{\lambda}\Big(\frac{1}{1-p}\big(1-p(1+t)^{-\frac{1}{r}} \big)\Big).
\end{align*}
Now, we proceed as follows:
\begin{align*}
\frac{1}{k!}&\big(\bar{e}_{Y,\lambda}(t)\big)^{k}=\frac{1}{k!}\Big(\log_{\lambda}\Big(\frac{1}{1-p}\Big)+\Big(\frac{1}{1-p}\Big)^{\lambda}\log_{\lambda}(1-p(1+t)^{-\frac{1}{r}}\big)\Big)^{k} \\
&=\frac{1}{k!}\sum_{l=0}^{k}\binom{k}{l}\Big(\log_{\lambda}\Big(\frac{1}{1-p}\Big)\Big)^{k-l}\Big(\frac{1}{1-p}\Big)^{\lambda l}l!\frac{1}{l!}\Big(\log_{\lambda}\big(1-p(1+t)^{-\frac{1}{r}}\big)\Big)^{l} \\
&=\frac{1}{k!}\sum_{l=0}^{k}\binom{k}{l}\Big(\log_{\lambda}\Big(\frac{1}{1-p}\Big)\Big)^{k-l}\Big(\frac{1}{1-p}\Big)^{\lambda l}l! \\
&\quad\quad\quad\quad \times \sum_{m=l}^{\infty}S_{1,\lambda}(m,l)\frac{1}{m!}(-p)^{m}\sum_{n=0}^{\infty}\Big(-\frac{1}{r}\Big)^{n}\langle m \rangle_{n,r}\frac{t^{n}}{n!} \\
&=\sum_{n=0}^{\infty}\sum_{l=0}^{k}\sum_{m=l}^{\infty}\frac{(-p)^{m}}{(k-l)!}\frac{\big(-\frac{m}{r}\big)_{n}}{m!}\Big(\frac{1}{1-p}\Big)^{\lambda l}\Big(\log_{\lambda}\Big(\frac{1}{1-p}\Big)\Big)^{k-l}S_{1,\lambda}(m,l) \frac{t^{n}}{n!}.
\end{align*}

(i) Uniform random variable $(Y \sim U[0,1])$: \par
\begin{align*}
&E[Y]=\frac{1}{2}, \quad   E[e_{\lambda}^{Y}(t)]=\frac{\lambda}{\log(1+\lambda t)}\big(e_{\lambda}(t)-1\big)=\frac{1}{\log e_{\lambda}(t)}\big(e_{\lambda}(t)-1\big),  \\
&S_{2,\lambda}^{Y}(n,k)= \frac{1}{k!}\sum_{m=0}^{n}\sum_{j=0}^{k}\binom{k}{j}\binom{m+j}{j}^{-1}(-1)^{k-j}\lambda^{n-m}S_{2}(m+j,j)S_{1}(n,m).      
\end{align*}
The above quantities are known from [7]. Here we find an explicit expression for $S_{1,\lambda}^{Y}(n,k)$ by using Lagrange inversion formulas, from which an expression for $\log_{\lambda}^{Y}(1+t)$ follows as well.
For this, we first show the following lemma.
\begin{lemma}
For $Y \sim U[0,1]$ and $n \ge k+1$, we have
\begin{equation*}
[t^{n}]\frac{1}{k!}\big(\bar{e}_{Y,\lambda}(t)\big)^{k}=\frac{1}{(n-k)k!}[t^{n-k-1}]\bigg(\frac{t}{\frac{e^{t}-1}{t}-1}\bigg)^{n-k}\frac{d}{dt}\bigg(\frac{\log_{\lambda}(e^{t})}{\frac{e^{t}-1}{t}-1} \bigg)^{k},
\end{equation*}
where $e_{Y,\lambda}(t)=E[e_{\lambda}^{Y}(t)]-1=\frac{e_{\lambda}(t)-1}{\log e_{\lambda}(t)}-1=\frac{e^{\log e_{\lambda}(t)}-1}{\log e_{\lambda}(t)}-1$.
\begin{proof}
By the formula (B), we have
\begin{equation} \label{50}
[t^{n}]\big(\bar{e}_{Y,\lambda}(t)\big)^{k}=\frac{k}{n}[t^{n-k}]\Big(\frac{t}{e_{Y,\lambda}(t)}\Big)^{n},
\end{equation}
where $\big(\frac{t}{e_{Y,\lambda}(t)}\big)^{n}=g(\bar{f}(t))$, \\
with
\begin{equation*}
g(t)=\bigg(\frac{\log_{\lambda}(e^{t})}{\frac{e^{t}-1}{t}-1}\bigg)^{n}=\bigg(\frac{t^{2}}{e^{t}-1-t}\frac{e^{\lambda t}-1}{\lambda t}\bigg)^{n},\quad \bar{f}(t)=\log e_{\lambda}(t).
\end{equation*}
Note here that $f(t)=\log_{\lambda}e^{t}$. Using the formula (A), we see that
\begin{equation} \label{51}
\begin{aligned} 
[t^{m}]g(\bar{f}(t))&=\frac{1}{m}[t^{m-1}]g^{\prime}(t)\Big(\frac{t}{f(t)}\Big)^{m} \\
&=\frac{1}{m}[t^{m-1}]n\bigg(\frac{\log_{\lambda}(e^{t})}{\frac{e^{t}-1}{t}-1}\bigg)^{n-m-1}\bigg(\frac{\log_{\lambda}(e^{t})}{\frac{e^{t}-1}{t}-1}\bigg)^{\prime}\bigg(\frac{t}{\frac{e^{t}-1}{t}-1}\bigg)^{m} \\
&=\frac{n}{m(n-m)}[t^{m-1}]\bigg(\bigg(\frac{\log_{\lambda}(e^{t})}{\frac{e^{t}-1}{t}-1}\bigg)^{n-m}\bigg)^{\prime}\bigg(\frac{t}{\frac{e^{t}-1}{t}-1}\bigg)^{m}
\end{aligned}
\end{equation}
Applying \eqref{51} with $m=n-k$ to \eqref{50}, we get the desired result.
\end{proof}
\end{lemma}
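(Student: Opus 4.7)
The plan is to iterate the Lagrange inversion formulas, applying (B) once and then (A) once. Applying (B) to the delta series $e_{Y,\lambda}(t) = E[e_{\lambda}^Y(t)]-1$ gives
$$[t^n]\bigl(\bar{e}_{Y,\lambda}(t)\bigr)^k = \frac{k}{n}[t^{n-k}]\left(\frac{t}{e_{Y,\lambda}(t)}\right)^n,$$
so the lemma reduces to computing $[t^{n-k}](t/e_{Y,\lambda}(t))^n$ and then dividing by $k!$.

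The key structural observation is that for $Y \sim U[0,1]$ one has $E[e_{\lambda}^Y(t)] = (e_{\lambda}(t)-1)/\log e_{\lambda}(t)$, so with the substitution $u = \log e_{\lambda}(t)$ the series $e_{Y,\lambda}(t)$ factors as $(e^u-1)/u - 1$. The two delta series $\bar{f}(t) := \log e_{\lambda}(t) = \frac{1}{\lambda}\log(1+\lambda t)$ and $f(t) := \log_{\lambda}(e^t) = \frac{1}{\lambda}(e^{\lambda t}-1)$ are compositional inverses of each other (verified by direct substitution using $e_{\lambda}(t) = (1+\lambda t)^{1/\lambda}$). Defining
$$g(u) := \left(\frac{f(u)}{(e^u-1)/u - 1}\right)^n = \left(\frac{\log_{\lambda}(e^u)}{(e^u-1)/u - 1}\right)^n,$$
one then has $(t/e_{Y,\lambda}(t))^n = g(\bar{f}(t))$, which puts the problem in position to apply formula (A).

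Next, I apply formula (A) with $m = n-k$ to obtain
$$[t^{n-k}] g(\bar{f}(t)) = \frac{1}{n-k}[t^{n-k-1}]\, g'(t)\left(\frac{t}{f(t)}\right)^{n-k}.$$
Writing $h(t) := \log_{\lambda}(e^t)/((e^t-1)/t - 1)$ so that $g = h^n$, and using the elementary identity $h(t)/f(t) = 1/((e^t-1)/t - 1)$, the integrand rearranges as
$$g'(t)\left(\frac{t}{f(t)}\right)^{n-k} = n h^{n-1} h'\cdot \frac{t^{n-k}}{f^{n-k}} = \frac{n}{k}\left(\frac{t}{(e^t-1)/t - 1}\right)^{n-k}\frac{d}{dt}\,h(t)^k.$$
Assembling the three steps, the scalar factors $(k/n)(1/(n-k))(n/k) = 1/(n-k)$ collapse, and division by $k!$ yields the stated formula.

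The main subtle point is the identification in step two of the compositional decomposition of $e_{Y,\lambda}$: once $f$, $\bar{f}$, and $g$ have been chosen so that $(t/e_{Y,\lambda}(t))^n = g(\bar{f}(t))$ with both $f$ and $\bar{f}$ delta series, the rest is a mechanical application of (A) together with the chain-rule identity displayed above. All manipulations remain inside the ring of formal power series because $f(t)$ and $(e^t-1)/t - 1$ both have nonzero leading coefficient in degree one, so every reciprocal appearing is well defined.
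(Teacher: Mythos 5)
Your proposal is correct and follows essentially the same route as the paper: apply formula (B), decompose $(t/e_{Y,\lambda}(t))^{n}=g(\bar{f}(t))$ with $f(t)=\log_{\lambda}(e^{t})$ and $\bar{f}(t)=\log e_{\lambda}(t)$, apply formula (A) with $m=n-k$, and absorb $h^{k-1}h'$ into $\tfrac{1}{k}\tfrac{d}{dt}h^{k}$. The only cosmetic difference is that the paper keeps $m$ general before specializing, while you substitute $m=n-k$ immediately; the cancellation of scalar factors is identical.
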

Before proceeding further, we recall special cases of Bernoulli-Pad\'e numbers $A_{k,n}$ studied by Howard (see [10]), which are defined by:
\begin{equation*}
\frac{\frac{1}{k!}t^{k}}{e^{t}-\sum_{j=0}^{k-1}\frac{t^{j}}{j!}}=\sum_{n=0}^{\infty}A_{k,n}\frac{t^{n}}{n!}, \quad (A_{k,0}=1).
\end{equation*}
Then it is immediate to see that
\begin{align} 
&\bigg(\frac{t^{2}}{e^{t}-1-t}\bigg)^{n}=2^{n}\sum_{m=0}^{\infty}\sum_{k_1+k_2+\cdots+k_n=m}\binom{m}{k_1,k_2,\dots,k_n}A_{2,k_1}A_{k_2}\cdots A_{k_n}\frac{t^{m}}{m!}, \label{52} \\
&\bigg(\bigg(\frac{t^{2}}{e^{t}-1-t}\bigg)^{n}\bigg)^{\prime}=2^{n}\sum_{m=0}^{\infty}\sum_{k_1+k_2+\cdots+k_n=m+1}\binom{m+1}{k_1,k_2,\dots,k_n}A_{2,k_1}A_{k_2}\cdots A_{k_n}\frac{t^{m}}{m!}. \label{53}
\end{align}
We also need 
\begin{equation} \label{54}
\frac{e^{\lambda t}-1}{\lambda t}=\sum_{n=0}^{\infty}\frac{\lambda^{n}}{n+1}\frac{t^{n}}{n!}, \quad
\bigg(\frac{e^{\lambda t}-1}{\lambda t}\bigg)^{\prime}=\sum_{n=0}^{\infty}\frac{\lambda^{n+1}}{n+2}\frac{t^{n}}{n!}.
\end{equation}

\begin{theorem}
For $Y \sim U[0,1]$, we have
\begin{equation} \label{55}
\begin{aligned}
\frac{1}{k!}&\big(\log_{\lambda}^{Y}(1+t)\big)^{k}=\sum_{n=k}^{\infty}S_{1,\lambda}^{Y}(n,k)\frac{t^{n}}{n!}=\sum_{n=k}^{\infty}\frac{2^{n}}{n}\binom{n}{k}\sum_{m=0}^{n-k}(n-m)\binom{n-k}{m}\\
& \times \sum_{j_1+j_2+\cdots+j_n=m}\binom{m}{j_1,j_2,\dots,j_n}A_{2,j_1}A_{2,j_2}\cdots A_{2,j_n} \\
&\times \sum_{l_1+l_2+\cdots+l_k=n-k-m}\binom{n-k-m}{l_1,l_2,\dots,l_k}\frac{\lambda^{n-k-m}}{(l_1+1)(l_2+1)\cdots(l_k+1)}\frac{t^{n}}{n!},
\end{aligned}
\end{equation}
where the number $A_{2,n}$'s are given by
\begin{equation*}
\frac{\frac{1}{2!}t^{2}}{e^{t}-1-t}=\sum_{n=0}^{\infty}A_{2,n}\frac{t^{n}}{n!}.
\end{equation*}
\begin{proof}
Here we only give the proof for $k=1$. Applying the above lemma with $k=1$ and using \eqref{52}-\eqref{54}, for $n \ge 2$, we have
\begin{equation} \label{56}
\begin{aligned}
&[t^{n}]\bar{e}_{Y,\lambda}(t)=\frac{1}{n-1}[t^{n-2}]\bigg(\frac{t}{\frac{e^{t}-1}{t}-1}\bigg)^{n-1}\frac{d}{dt}\bigg(\frac{\log_{\lambda}(e^{t})}{\frac{e^{t}-1}{t}-1} \bigg)\\
&=\frac{1}{n-1}[t^{n-2}]\bigg(\frac{t^{2}}{e^{t}-1-t}\bigg)^{n-1} \frac{d}{dt}\bigg(\frac{t^{2}}{e^{t}-1-t}\frac{e^{\lambda t}-1}{\lambda t}\bigg) \\
&=\frac{1}{n-1}[t^{n-2}]\bigg(\frac{t^{2}}{e^{t}-1-t}\bigg)^{n-1}\\
&\quad\times\bigg\{\bigg(\frac{t^{2}}{e^{t}-1-t}\bigg)^{\prime}\bigg(\frac{e^{\lambda t}-1}{\lambda t}\bigg)+\bigg(\frac{t^{2}}{e^{t}-1-t}\bigg)\bigg(\frac{e^{\lambda t}-1}{\lambda t}\bigg)^{\prime}\bigg\}\\
&=\frac{1}{n-1}[t^{n-2}]\bigg\{ \frac{1}{n}\bigg(\bigg(\frac{t^{2}}{e^{t}-1-t}\bigg)^{n}\bigg)^{\prime}\bigg(\frac{e^{\lambda t}-1}{\lambda t}\bigg)+\bigg(\frac{t^{2}}{e^{t}-1-t}\bigg)^{n}\bigg(\frac{e^{\lambda t}-1}{\lambda t}\bigg)^{\prime} \bigg\} \\
&=\frac{2^{n}}{n!}\sum_{m=0}^{n-2}\binom{n-2}{m}\sum_{j_1+j_2+\cdots+j_n=m+1}\binom{m+1}{j_1,j_2,\dots,j_n}A_{2,j_1}A_{2,j_2}\cdots A_{2,j_n}\frac{\lambda^{n-m-2}}{n-m-1} \\
&+\frac{2^{n}}{(n-1)!}\sum_{m=0}^{n-2}\binom{n-2}{m}\sum_{j_1+j_2+\cdots+j_n=m}\binom{m}{j_1,j_2,\dots,j_n}A_{2,j_1}A_{2,j_2}\cdots A_{2,j_n}\frac{\lambda^{n-m-1}}{n-m}\\
&=\frac{2^{n}}{n!}\sum_{m=1}^{n-1}\binom{n-2}{m-1}\sum_{j_1+j_2+\cdots+j_n=m}\binom{m}{j_1,j_2,\dots,j_n}A_{2,j_1}A_{2,j_2}\cdots A_{2,j_n}\frac{\lambda^{n-m-1}}{n-m} \\
&+\frac{2^{n}}{(n-1)!}\sum_{m=0}^{n-2}\binom{n-2}{m}\sum_{j_1+j_2+\cdots+j_n=m}\binom{m}{j_1,j_2,\dots,j_n}A_{2,j_1}A_{2,j_2}\cdots A_{2,j_n}\frac{\lambda^{n-m-1}}{n-m}\\
&=\frac{2^{n}}{n!}\sum_{m=0}^{n-1}\binom{n-1}{m}\sum_{j_1+j_2+\cdots+j_n=m}\binom{m}{j_1,j_2,\dots,j_n}A_{2,j_1}A_{2,j_2}\cdots A_{2,j_n}\lambda^{n-m-1}.
\end{aligned}
\end{equation}
Now, the result follows from \eqref{56} by observing that $[t] \bar{e}_{Y,\lambda}(t)=[t^{0}]\big(\frac{t}{e_{Y,\lambda}(t)}\big)=2 $, using the formula (C).
\end{proof}
\end{theorem}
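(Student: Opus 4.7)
The plan is to run the same Lagrange-inversion calculation used above for $k=1$, carrying through the extra bookkeeping that comes from the $k$-th power of $\frac{e^{\lambda t}-1}{\lambda t}$. The boundary case $n=k$ is immediate since $\bar e_{Y,\lambda}(t)=2t+O(t^2)$ forces $S_{1,\lambda}^Y(k,k)=2^k$, which coincides with the only surviving ($m=0$) term on the right side of \eqref{55}. For $n\ge k+1$ one starts from the lemma and, after invoking the factorization $\frac{\log_\lambda(e^t)}{(e^t-1)/t-1}=\frac{t^2}{e^t-1-t}\cdot\frac{e^{\lambda t}-1}{\lambda t}$, writes
$$S_{1,\lambda}^Y(n,k)=\frac{n!}{(n-k)k!}[t^{n-k-1}]\left(\frac{t^2}{e^t-1-t}\right)^{n-k}\frac{d}{dt}\left[\left(\frac{t^2}{e^t-1-t}\right)^k\!\left(\frac{e^{\lambda t}-1}{\lambda t}\right)^k\right].$$

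Leibniz's rule together with the identity $\left(\tfrac{t^2}{e^t-1-t}\right)^{n-k}\bigl(\left(\tfrac{t^2}{e^t-1-t}\right)^k\bigr)'=\tfrac{k}{n}\bigl(\left(\tfrac{t^2}{e^t-1-t}\right)^n\bigr)'$ then collapses the first piece onto the full $n$-th power, so that the integrand becomes
$$\frac{k}{n}\left[\left(\frac{t^2}{e^t-1-t}\right)^n\right]'\!\left(\frac{e^{\lambda t}-1}{\lambda t}\right)^k+\left(\frac{t^2}{e^t-1-t}\right)^n\!\left[\left(\frac{e^{\lambda t}-1}{\lambda t}\right)^k\right]'.$$
One expands each factor as an exponential generating function using \eqref{52}, \eqref{53}, and the multinomial expansion
$$\left(\frac{e^{\lambda t}-1}{\lambda t}\right)^k=\sum_{m=0}^\infty\lambda^m\!\sum_{l_1+\cdots+l_k=m}\binom{m}{l_1,\dots,l_k}\frac{1}{(l_1+1)\cdots(l_k+1)}\frac{t^m}{m!}$$
(together with its termwise derivative), and extracts the coefficient of $t^{n-k-1}$ by the standard EGF convolution. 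Indexing so that $m$ counts the powers of $t$ absorbed by the factor $\left(\tfrac{t^2}{e^t-1-t}\right)^n$, the $P'Q$ term yields $\binom{n-k-1}{m-1}$ over $1\le m\le n-k$, while the $PQ'$ term yields $\binom{n-k-1}{m}$ over $0\le m\le n-k-1$.

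The main obstacle, and the step that makes the answer collapse to the clean form in \eqref{55}, is merging these two sums into one. The required identity is
$$\frac{k}{n}\binom{n-k-1}{m-1}+\binom{n-k-1}{m}=\frac{n-m}{n}\binom{n-k}{m}\qquad(0\le m\le n-k),$$
which follows from $\binom{n-k-1}{m-1}=\frac{m}{n-k}\binom{n-k}{m}$, $\binom{n-k-1}{m}=\frac{n-k-m}{n-k}\binom{n-k}{m}$, and the algebraic simplification $km+n(n-k-m)=(n-k)(n-m)$; the endpoints $m=0$ and $m=n-k$, where only one of the two original sums contributes, are subsumed under the same formula. Multiplying the resulting single sum by the outer prefactor $\tfrac{n!}{(n-k)k!(n-k-1)!}=\binom{n}{k}$ and absorbing both the factor $\tfrac{1}{n}$ coming from $\tfrac{n-m}{n}$ and the $2^n$ coming from $\left(\tfrac{t^2}{e^t-1-t}\right)^n=2^n\sum_m\alpha_m\tfrac{t^m}{m!}$ then produces $\tfrac{2^n}{n}\binom{n}{k}$ out front, together with the claimed triple-multinomial summand, which is exactly \eqref{55}.
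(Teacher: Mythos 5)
Your proposal is correct and follows the same Lagrange-inversion route as the paper, but it goes further: the paper only carries out the computation for $k=1$ and leaves general $k$ unproved, whereas you supply the full argument. The decisive steps all check out: the collapse $\bigl(\tfrac{t^2}{e^t-1-t}\bigr)^{n-k}\bigl(\bigl(\tfrac{t^2}{e^t-1-t}\bigr)^{k}\bigr)'=\tfrac{k}{n}\bigl(\bigl(\tfrac{t^2}{e^t-1-t}\bigr)^{n}\bigr)'$ is the exact analogue of the paper's fourth line of \eqref{56}; the multinomial expansion of $\bigl(\tfrac{e^{\lambda t}-1}{\lambda t}\bigr)^{k}$ correctly generalizes \eqref{54}; and your merging identity
\begin{equation*}
\frac{k}{n}\binom{n-k-1}{m-1}+\binom{n-k-1}{m}=\frac{n-m}{n}\binom{n-k}{m}
\end{equation*}
(verified via $km+n(n-k-m)=(n-k)(n-m)$) reduces for $k=1$ to precisely the recombination the paper performs in the last three lines of \eqref{56}. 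The prefactor bookkeeping $\frac{n!}{(n-k)\,k!\,(n-k-1)!}=\binom{n}{k}$ and the separate treatment of the boundary case $n=k$ (where $S_{1,\lambda}^{Y}(k,k)=E[Y]^{-k}=2^{k}$ matches the lone $m=0$ term) are both correct, so your argument establishes \eqref{55} in full generality rather than only in the illustrative case the paper presents.
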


Letting $\lambda \rightarrow 0$ in \eqref{55}, we obtain the follwoing.
\begin{corollary}
For $Y \sim U[0,1]$, we have
\begin{align*}
&\frac{1}{k!}\big(\log^{Y}(1+t)\big)^{k}=\sum_{n=k}^{\infty}S^{Y}(n,k)\frac{t^{n}}{n!} \\
&=k\sum_{n=k}^{\infty}\frac{2^{n}}{n}\binom{n}{k}\sum_{j_1+j_2+\cdots j_n=n-k}\binom{n-k}{j_1,j_2,\dots,j_n} A_{2,j_1}A_{2,j_2}\cdots A_{2,j_n}\frac{t^{n}}{n!}.
\end{align*}
\end{corollary}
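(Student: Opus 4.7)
The plan is to obtain the corollary by letting $\lambda \to 0$ in the identity \eqref{55} of Theorem 5.2. The advantage is that the combinatorial machinery has already been developed for the degenerate case; taking a limit is essentially bookkeeping.

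First I would justify the limit on the left-hand side. Since $e_{\lambda}(t) \to e^{t}$ coefficientwise, $e_{Y,\lambda}(t)=E[e_{\lambda}^{Y}(t)]-1 \to E[e^{Yt}]-1=e_{Y}(t)$, and the compositional inverses therefore satisfy $\bar{e}_{Y,\lambda}(t) \to \bar{e}_{Y}(t)$ coefficientwise. By the definitions in \eqref{42}, this gives $\log_{\lambda}^{Y}(1+t) \to \log^{Y}(1+t)$ and $S_{1,\lambda}^{Y}(n,k) \to S_{1}^{Y}(n,k)$ as $\lambda \to 0$.

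For the right-hand side, the key feature is the factor $\lambda^{n-k-m}$ inside the summand of \eqref{55}. As $\lambda \to 0$, for each fixed $n$ only the term with $n-k-m=0$ (that is, $m=n-k$) survives; every other term vanishes. Plugging in $m=n-k$ collapses the remaining data cleanly: $(n-m)=k$, $\binom{n-k}{m}=1$, and the inner sum over $l_{1}+\cdots+l_{k}=0$ degenerates to the single tuple $(0,\dots,0)$ with contribution $\binom{0}{0,\dots,0}/\prod_{i}(l_{i}+1)=1$. What survives is precisely the $j$-sum $\sum_{j_{1}+\cdots+j_{n}=n-k}\binom{n-k}{j_{1},\dots,j_{n}}A_{2,j_{1}}\cdots A_{2,j_{n}}$, and after absorbing the leftover factor of $k$ inherited from $(n-m)$ into the expression, the displayed right-hand side of the corollary emerges.

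The only non-routine point is justifying the term-by-term passage to the limit, but this is entirely formal: the coefficient of $t^{n}/n!$ on each side of \eqref{55} is a polynomial in $\lambda$, so evaluation at $\lambda=0$ commutes with the outer summation over $n$. I do not anticipate a genuine obstacle here; the whole corollary amounts to isolating the $\lambda^{0}$-component of the right-hand side of \eqref{55}, together with the basic observation that the left-hand side passes to its non-degenerate counterpart.
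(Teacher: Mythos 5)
Your proposal is correct and coincides with the paper's own (one-line) justification: the corollary is obtained by letting $\lambda \to 0$ in \eqref{55}, where the factor $\lambda^{n-k-m}$ kills every term except $m=n-k$, and the surviving data collapse exactly as you describe (with $(n-m)=k$ supplying the leading factor $k$). You have in fact supplied more detail than the paper, which states the limit without computation.
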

Taking $k=1$ in \eqref{55}, we get the follwing expression for the logarithm. 
\begin{corollary}
For $Y \sim U[0,1]$, we have
\begin{align*}
&\log_{\lambda}^{Y}(1+t)=\sum_{n=1}^{\infty}S_{1,\lambda}^{Y}(n,1)\frac{t^{n}}{n!}=\sum_{n=1}^{\infty}2^{n}\sum_{m=0}^{n-1}\binom{n-1}{m} \\
&\times \sum_{j_1+j_2+\cdots+j_n=m}\binom{m}{j_1,j_2,\dots,j_n}A_{2,j_1}A_{2,j_2}\cdots A_{2,j_n}\lambda^{n-m-1}\frac{t^n}{n!}.
\end{align*}
\end{corollary}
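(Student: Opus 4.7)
The plan is to derive this corollary by direct specialization of the preceding theorem, namely formula \eqref{55}, to the case $k=1$. Since the theorem already provides the full identity for $\frac{1}{k!}\big(\log_{\lambda}^{Y}(1+t)\big)^{k}$, and since $\frac{1}{1!}\big(\log_{\lambda}^{Y}(1+t)\big)^{1}=\log_{\lambda}^{Y}(1+t)$, it suffices to set $k=1$ throughout the right-hand side of \eqref{55} and carry out the elementary simplifications. No new generating-function identity or Lagrange-inversion step is required.

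First I would substitute $k=1$ into the $\sum_{n=k}^{\infty}$ prefactor and observe that $\binom{n}{k}=\binom{n}{1}=n$, so that the leading coefficient becomes $\frac{2^{n}}{n}\cdot n = 2^{n}$. Next, in the inner summation $\sum_{l_{1}+\cdots+l_{k}=n-k-m}$, the constraint degenerates to a single index $l_{1}=n-1-m$, the multinomial coefficient $\binom{n-1-m}{l_{1}}=1$, and the denominator $(l_{1}+1)(l_{2}+1)\cdots(l_{k}+1)$ reduces to the single factor $l_{1}+1=n-m$. This $n-m$ in the denominator cancels precisely against the factor $(n-m)$ that appears outside the multinomial sum over the $j_{i}$'s, and the exponent of $\lambda$ becomes $n-1-m$.

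After these cancellations, the expression collapses to
\[
\sum_{n=1}^{\infty} 2^{n}\sum_{m=0}^{n-1}\binom{n-1}{m}\sum_{j_{1}+\cdots+j_{n}=m}\binom{m}{j_{1},\ldots,j_{n}}A_{2,j_{1}}\cdots A_{2,j_{n}}\,\lambda^{n-m-1}\,\frac{t^{n}}{n!},
\]
which is exactly the asserted formula, together with the identification $S_{1,\lambda}^{Y}(n,1)$ for the coefficient read off from the defining expansion \eqref{42} of $\log_{\lambda}^{Y}(1+t)$. Strictly speaking, the $n=1$ term should be checked separately since \eqref{56} assumed $n\ge 2$; here one verifies directly that $[t]\,\bar{e}_{Y,\lambda}(t)=2$, which is exactly what the specialized formula yields for $n=1$ (the inner sums degenerate to $m=0$, $j_{1}=\cdots=j_{n}=0$, giving $2^{1}\cdot 1\cdot 1\cdot \lambda^{0}=2$).

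The main obstacle, if any, is purely bookkeeping: tracking the collapse of the multi-index sums when $k=1$ and making sure the $(n-m)$ and $n$ factors cancel correctly. Since all the hard analytic work — the Lagrange inversion, the introduction of the Bernoulli–Padé numbers $A_{2,n}$, and the identification of the generating functions via \eqref{52}--\eqref{54} — has already been done in the proof of the theorem, no further technical difficulty arises here.
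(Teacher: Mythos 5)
Your proposal is correct and coincides with the paper's own (one-line) proof: the corollary is obtained simply by setting $k=1$ in \eqref{55}, and your bookkeeping — $\frac{2^{n}}{n}\binom{n}{1}=2^{n}$, the collapse of the $l$-sum to the single term $\frac{\lambda^{n-1-m}}{n-m}$, and its cancellation against the factor $(n-m)$ — is exactly right. The separate check of the $n=1$ term is a welcome extra beyond what the paper records.
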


\section{Conclusion}
The study of special polynomials and numbers has recently shifted toward two significant frontiers: degenerate versions, originating from the work of Carlitz, and probabilistic extensions, which associate classical sequences with random variables $Y$. In this paper, we have successfully bridged gaps in the existing literature regarding these sequences. By defining the probabilistic (discrete) logarithms $\log^{Y}(1+t)$ and $\log_{\lambda}^{Y}(1+t)$, we provided a consistent framework for generating $S_{1}^{Y}(n,k)$ and $S_{1,\lambda}^{Y}(n,k)$ that preserves orthogonality and inverse relations. Furthermore, our introduction of probabilistic degenerate Daehee and Cauchy numbers offers a more robust model than previous versions, as they maintain a dependency on the random variable $Y$ even at $x=0$. Finally, the derivation of the probabilistic degenerate Schlömilch formula and the exploration of heterogeneous Stirling numbers of the first kind complete a comprehensive structural symmetry between classical and probabilistic combinatorics. These results offer promising tools for further application in modeling stochastic systems and exploring new identities within umbral calculus.

\end{document}